\newtheorem{theorem}{Theorem}
\newtheorem{lemma}[theorem]{Lemma}
\newtheorem{corollary}[theorem]{Corollary}
\newtheorem{proposition}[theorem]{Proposition}
\newtheorem{remark}[theorem]{Remark}
\newtheorem{definition}[theorem]{Definition}
\newtheoremstyle{tttheorem}
{}                
{}                
{\slshape}        
{}                
{\bfseries}       
{'}               
{ }               
{}                
\theoremstyle{tttheorem}
\def\XXint#1#2#3{{\setbox0=\hbox{$#1{#2#3}{\int}$ }
		\vcenter{\hbox{$#2#3$ }}\kern-.6\wd0}}
\newcommand{\Ss}{\mathbf{S}}
\newcommand{\R}{\mathbf{R}}
\newcommand{\B}{\mathbf{B}}
\DeclareMathOperator{\Ric}{Ric}
\begin{document}
\title[Moduli of constant $Q$-curvature metrics]{Moduli space theory for complete, 
constant $Q$-curvature metrics on finitely punctured spheres}  

\author[R. Caju]{Rayssa Caju}
\author[J. Ratzkin]{Jesse Ratzkin}
\author[A. Silva Santos]{Almir Silva Santos}

\address[R. Caju]{Department of Mathematical Engineering, University of Chile
\newline\indent 
    Beauchef 851, Edificio Norte, Santiago, Chile}
\email{\href{mailto:rayssacaju@gmail.com}{rcaju@dim.uchile.cl}}

\address[J. Ratzkin]{Department of Mathematics,
	Universit\"{a}t W\"{u}rzburg
	\newline\indent
	97070, W\"{u}rzburg-BA, Germany}
\email{\href{mailto:jesse.ratzkin@uni-wuerzburg.de}{jesse.ratzkin@uni-wuerzburg.de}}

\address[A. Silva Santos]{Department of Mathematics, 
	Federal University of Sergipe
	\newline\indent 
	49107-230, Sao Cristov\~ao-SE, Brazil}
\email{\href{mailto: almir@mat.ufs.br}{ almir@mat.ufs.br}}

\thanks{RC was partially supported by FONDECYT grant number 11230872 and by Centro de Modelamiento Matemático (CMM) BASAL fund FB210005 for center of excellence from ANID-Chile. ASS was partially supported by CNPq grant number 403349/2021-4, 408834/2023-4 and 312027/2023-0}
\subjclass[2020]{53C18, 58D17, 58D27}
\keywords{Paneitz--Branson operator, $Q$-curvature, moduli spaces}

\maketitle

\begin{abstract}
We study constant $Q$-curvature metrics conformal to the 
the round metric on the sphere with finitely many point singularities. We show that 
the moduli space of solutions with finitely many punctures in fixed 
positions, equipped 
with the Gromov-Hausdorff topology, has the local structure of a real algebraic variety with formal 
dimension equal to the number of the punctures. If a nondegeneracy hypothesis holds, 
we show that a neighborhood in the moduli spaces is actually a smooth, real-analytic 
manifold of the expected dimension. We also construct a geometrically natural 
set of parameters, construct a symplectic 
structure on this parameter space and show that in the smooth case 
a small neighborhood of the moduli space embeds as a Lagrangian 
submanifold in the parameter space. 
\end{abstract}
\section{Introduction} 

In this manuscript we study a fourth-order analog of the singular 
Yamabe problem on finitely punctured spheres, as formulated by Schoen and 
Yau \cite{SY}, Mazzeo, Pollack, and Uhlenbeck \cite{MPU} and others. Our main 
result characterizes the local structure of the moduli space 
of solutions in the case of the round metric on a finitely 
punctured sphere. 

In general, if $(M,g)$ is a Riemannian manifold of dimension 
$n \geq 5$ one defines the $Q$-curvature of $g$ as 
\begin{equation} \label{q_defn}
Q_g = -\frac{1}{2(n-1)} \Delta_g R_g - \frac{2}{(n-2)^2} |\Ric_g|^2 
+ \frac{n^3-4n^2 +16n-16}{8(n-1)^2(n-2)^2} R_g^2,
\end{equation}
where $R_g$ and $\Ric_g$ are the scalar and Ricci curvatures and 
$\Delta_g$ is the Laplace-Beltrami operator. A short computation 
demonstrates that $\displaystyle Q_{\overset{\circ}{g}} = \frac{n(n^2-4)}{8}$, 
where $\overset{\circ}{g}$ is the usual round metric on the sphere 
$\Ss^n$. A longer computation shows that the task of finding a 
conformal metric $\widetilde g = U^{\frac{4}{n-4}} g$ with $Q$-curvature 
equal to $\displaystyle \frac{n(n^2-4)}{8}$ is 
equivalent to solving the nonlinear partial differential 
equation 
\begin{equation} \label{const_q_general}
 \mathcal{H}_g (U) := P_g(U) - \frac{n(n-4)(n^2-4)}{16} 
U^{\frac{n+4}{n-4}}=0
\end{equation} 
where
\begin{equation} \label{paneitz_op_general} 
P_g(U) := (-\Delta_g)^2(U) + \operatorname{div} \left ( \frac{4}{n-2} 
\Ric_g (\nabla U, \cdot )- \frac{(n-2)^2 + 4}{2(n-1)(n-2)} R_g \nabla U\right )
+ \frac{n-4}{2} Q_g U
\end{equation} 
is the Paneitz-Branson operator. This operator is conformally covariant, 
in that 
\begin{equation} \label{trans_law1} 
P_{U^{\frac{4}{n-4}} g} (\phi) 
= U^{-\frac{n+4}{n-4}} P_g (U\phi).
\end{equation} 
Substituting $\phi=1$ into \eqref{trans_law1} we obtain the transformation 
law for $Q$-curvature under a conformal change of metric, which is 
\begin{equation} \label{trans_law2} 
Q_{U^{\frac{4}{n-4}} g} 
= \frac{2}{n-4} U^{-\frac{n+4}{n-4}} P_g(U). 
\end{equation} 

In the case that the background metric is $\overset{\circ}{g}$ the 
Paneitz operator factors as 
\begin{equation}\label{paneitz_op_sph}
P_{\overset{\circ}{g}} = \left ( -\Delta_{\overset{\circ}{g}} + \frac{(n-4)(n+2)}{4} 
\right ) \left ( -\Delta_{\overset{\circ}{g}} + \frac{n(n-2)}{4} \right ).
\end{equation} 
This factorization in some sense simplifies the analysis of \eqref{const_q_general}, 
making the study of the space of solutions in this setting seem 
more approachable. However, the conformal invariance of \eqref{trans_law1}
combined with the noncompactness of the conformal group of 
the round metric leads to the blow-up of sequences of solutions. 
For this reason we study the following singular problem: given a 
closed subset $\Lambda \subset \Ss^n$, describe 
all the conformal metrics $g = U^{\frac{4}{n-4}} \overset{\circ}{g}$ 
that are complete on $\Ss^n \backslash \Lambda$ and have 
$\displaystyle Q_g = \frac{n(n^2-4)}{8}$. We can reformulate 
this geometric problem as the following infinite boundary 
value problem: 
\begin{equation} \label{sing_yamabe_eqn} 
U: \Ss^n \backslash \Lambda \rightarrow (0,\infty), \qquad 
\mathcal{H}_{\overset{\circ}{g}} (U) = 0 , \qquad 
\liminf_{p\rightarrow \Lambda} U(p) = \infty. 
\end{equation} 

We concentrate on the case that $\Lambda = \{ p_1, \dots, p_k\}$ 
is a finite set of points and define the marked moduli space 
$$\mathcal{M}_\Lambda = \left \{ g \in [\overset{\circ}{g}] : Q_g = 
\frac{n(n^2-4)}{8} \mbox{ and } g \textrm{ is complete on }\Lambda \right \}$$
and the unmarked moduli space 
$$\mathcal{M}_k = \left \{ g \in [\overset{\circ}{g}] : Q_g 
= \frac{n(n^2-4)}{8}, \, g \textrm{ is complete on }\Lambda, \, \# \Lambda = k 
\right \} .$$
Here $[\overset{\circ}{g}]$ is the set of all conformal metric to $\overset{\circ}{g}$. We equip both moduli 
spaces with the Gromov-Hausdorff topology. The difference 
between the two is that in the marked moduli space we fix the singular 
points, whereas in the unmarked moduli space we allow them to vary, so long as 
they remain $k$ distinct points.  

Let $\Lambda = \{ p_1, \dots, p_k\} \subset \Ss^n$ and let $g\in 
\mathcal{M}_\Lambda$. Then $g$ admits a definite asymptotic 
structure near each singular point $p_i$, and is asymptotic to one 
of the Delaunay metrics described below in Section \ref{sec:prelim}. 
These Delaunay metrics are, after an appropriate change of variables, 
periodic and uniquely described by their necksizes $\varepsilon \in 
(0,\overline{\varepsilon}]$, where the maximal necksize $\overline{\varepsilon}$ 
depends only on the dimension $n$, which allows one to assign asymptotic 
data to $g \in \mathcal{M}_k$, including the asymptotic necksize 
$\varepsilon_i$ at the singular point $p_i$. We ask the following 
question: how well does this asymptotic data determine a metric 
$g \in \mathcal{M}_\Lambda$? Our results below form a 
first step in answering this question. More precisely, we show that 
under some conditions one can use the asymptotic data to parameterize a 
small neighborhood of moduli space near $g \in \mathcal{M}_\Lambda$. 

Our main theorem is the following result. 
\begin{theorem} \label{loc_struct_thm} 
For each finite subset $\Lambda \subset \Ss^n$ with $\# \Lambda =k\geq 3$ 
the moduli space $\mathcal{M}_\Lambda$ is locally a 
real analytic variety of formal dimension $k$.
\end{theorem}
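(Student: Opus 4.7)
The plan is to adapt the Mazzeo-Pollack-Uhlenbeck framework for the singular Yamabe problem \cite{MPU} to the fourth-order Paneitz-Branson setting. Given $g_0 = U_0^{4/(n-4)}\overset{\circ}{g} \in \mathcal{M}_\Lambda$, I want to realize a neighborhood of $g_0$ in $\mathcal{M}_\Lambda$ as the zero set of a real-analytic Fredholm map of index $k$ between Banach spaces. Since $\mathcal{H}_{\overset{\circ}{g}}$ in \eqref{const_q_general} is polynomial in $U$ and its derivatives, the real analyticity of the local model follows from the analytic implicit function theorem once the linearization has been placed in an appropriate Fredholm setting.

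First, I would introduce weighted Hölder spaces $C^{4,\alpha}_\delta(\Ss^n\setminus\Lambda)$ capturing the Delaunay asymptotics. Near each puncture $p_i$, adopt cylindrical coordinates $(t_i,\theta_i) \in (T_i,\infty)\times \Ss^{n-1}$ in which the Delaunay models are periodic in $t_i$, and define the norm by requiring $e^{-\delta t_i}\phi$ to be uniformly Hölder-bounded up to order four. The linearization of $\mathcal{H}_{\overset{\circ}{g}}$ at $U_0$ is the Jacobi operator
\begin{equation*}
L_{U_0}(\phi) = P_{\overset{\circ}{g}}(\phi) - \frac{n(n+4)(n^2-4)}{16}\,U_0^{8/(n-4)}\,\phi,
\end{equation*}
a fourth-order self-adjoint elliptic operator. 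By the conformal covariance \eqref{trans_law1}, up to a weight shift $L_{U_0}$ is equivalent to the Jacobi operator computed in the background $g_0$, which near each puncture limits to the Jacobi operator of the corresponding Delaunay model.

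Next, I would show that $L_{U_0}\colon C^{4,\alpha}_\delta \to C^{0,\alpha}_\delta$ is Fredholm whenever $\delta$ avoids the indicial set of the Delaunay linearization. The indicial roots are obtained by separating variables on each cylindrical model, reducing to a Floquet problem for a fourth-order ODE on each spherical harmonic mode of the Jacobi operator. Choosing $\delta>0$ just above the critical weight but below the next positive indicial root, a relative index argument (à la Lockhart-McOwen / Mazzeo) identifies the Fredholm index of $L_{U_0}$ with a sum over punctures: the one-parameter Delaunay family contributes a pair of indicial roots at each cusp whose relative multiplicity is exactly one, yielding total index $k$.

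Finally, a Liapunov-Schmidt reduction splits off the finite-dimensional kernel $K$ and cokernel $K^*$ of $L_{U_0}$; the orthogonal equation is solved via the analytic implicit function theorem, reducing $\mathcal{H}_{\overset{\circ}{g}}(U_0+\phi)=0$ to a finite-dimensional real-analytic map $F\colon K \to K^*$ with $\dim K - \dim K^* = k$, whose zero set is the local model for $\mathcal{M}_\Lambda$, a real-analytic variety of formal dimension $k$. The principal obstacle is the Fredholm/index step: indicial analysis for the fourth-order Paneitz-Branson Jacobi operator over a Delaunay cusp is more delicate than in the second-order Yamabe setting, since on each spherical harmonic one must study a fourth-order linear ODE with periodic coefficients and then identify which of the resulting indicial roots at zero correspond to genuine geometric deformations (asymptotic necksize variations) rather than to symmetries or accidental kernel modes of the Delaunay family.
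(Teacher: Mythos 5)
Your overall road map (weighted spaces adapted to the Delaunay ends, Fredholm theory for the linearized Paneitz operator away from the indicial set, a relative-index count giving formal dimension $k$, then a Lyapunov--Schmidt reduction via the analytic implicit function theorem) is the same as the paper's. However, there is a genuine gap at the nonlinear step. To obtain index $k$ you must work at a weight that allows non-decaying behavior at the punctures (your $C^{4,\alpha}_\delta$ with $\delta>0$ permits growth like $e^{\delta t_i}$), and you then propose to run Lyapunov--Schmidt directly on $\phi \mapsto \mathcal{H}_{\overset{\circ}{g}}(U_0+\phi)$ in that space. This map is not well defined, let alone real analytic, there: in Emden--Fowler coordinates the background conformal factor is bounded (comparable to $v_{\varepsilon_i}$), so for any nonzero $\phi$ with $e^{\delta t}$ growth the positivity of $U_0+\phi$ fails far down the end no matter how small its norm, and even formally the quadratic remainder of $(U_0+\phi)^{\frac{n+4}{n-4}}$ grows like $e^{2\delta t}$ and leaves the target space. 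More geometrically, adding the non-decaying Jacobi fields linearly does not approximate actual nearby solutions along an end: $v_{\varepsilon+a}(t+T)-v_\varepsilon(t+T)-a\,\tfrac{d}{d\varepsilon}v_\varepsilon(t+T)$ is not uniformly small in $t$, so the kernel of the linearization on the growth space cannot be matched to the zero set of the nonlinear operator --- exactly the difficulty the paper points out before introducing its remedy.

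The missing ingredient is the deficiency space $\mathcal{W}_g$ of Definition \ref{deficiency_defn}: one perturbs by a decaying function $v\in W^{4,2}_{-\delta}$ together with a finite-dimensional, nonlinearly implemented deformation of the asymptotic Delaunay data $(\varepsilon_i,T_i)\mapsto(\varepsilon_i+a_i^-,T_i+a_i^+)$ glued in by a cutoff, as in \eqref{deformation_space3}. The resulting $Q$-curvature error is small and compactly supported, the map \eqref{deformation_space2} is analytic near the origin, its linearization is $L_g$ on $W^{4,2}_{-\delta}\oplus\mathcal{W}_g$ with kernel $\mathcal{B}_g\oplus K$ and cokernel identified with the $L^2$-kernel $K$, and the projected implicit-function-theorem argument of Section \ref{sec:degen} then yields the local real-analytic model of formal dimension $k$. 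A minor further point: your index bookkeeping is slightly off --- the Delaunay family at each end is two-parameter (necksize and translation), contributing two tempered Jacobi fields $w_0^\pm(\varepsilon_i)$ and hence $2$ to the relative index $\operatorname{ind}(\delta)-\operatorname{ind}(-\delta)$ per puncture; it is the formal self-adjointness of $L_g$, giving $\operatorname{ind}(-\delta)=-\operatorname{ind}(\delta)$, that halves this count and produces $\operatorname{ind}(\delta)=k$, equivalently $\dim\mathcal{B}_g=k$.
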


Our proof follows the road map developed by Kusner, Mazzeo, Pollack 
and Uhlenbeck in \cite{KMP} and \cite{MPU}, combining the implicit function 
theorem and the Lyaponov-Schmidt process. The key technical 
part of our analysis is a fine understanding the linearized operator
$L_g$ of the operator $\mathcal H_g$ defined in \eqref{const_q_general}, which we describe in detail in Section \ref{sec:reformulations}.

As is usually the case, the analysis allows us to make a more precise 
statement if $L_g$ is injective or surjective when acting on an 
appropriate function space. 
\begin{definition} \label{nodegen_defn} 
A metric $g \in \mathcal{M}_\Lambda$ is nondegenerate if $w \in L^2$ and 
$L_g(w) = 0$ implies $w \equiv 0$. 
\end{definition}
\begin{theorem} \label{thm_nondegen_case}
If $g \in \mathcal{M}_\Lambda$ is nondegenerate 
then there exists an open neighborhood $\mathcal{U}\subset \mathcal{M}_\Lambda$
of $g$ that is a real analytic manifold of dimension $k$. 
\end{theorem}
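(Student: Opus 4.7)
The plan is to apply the real-analytic implicit function theorem to $\mathcal{H}_{\overset{\circ}{g}}$ in weighted function spaces tailored to the Delaunay asymptotics of $g$ near each $p_i\in\Lambda$. I would fix the background solution $U_g$ corresponding to $g$, write any nearby conformal factor as $U_g+w$, and expand
\begin{equation*}
\mathcal{H}_{\overset{\circ}{g}}(U_g+w)=L_g(w)+Q(w),
\end{equation*}
where $L_g$ is the linearization described in Section \ref{sec:reformulations} and $Q(w)$ is a real-analytic superlinear remainder whose coefficients depend polynomially on $U_g$.

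First, I would study $L_g$ on a scale of weighted H\"{o}lder spaces $C^{4,\alpha}_{\delta}(\Ss^n\setminus\Lambda)$ in which the parameter $\delta$ encodes the rate of decay or growth at each puncture. Using the Delaunay asymptotic structure of $g$, one extracts the indicial roots of $L_g$ at each $p_i$: they come in symmetric pairs determined by linearizing the factored operator \eqref{paneitz_op_sph} along the corresponding Delaunay solution. Choosing $\delta$ in a Fredholm window just outside the two "temperate" indicial roots makes $L_g\colon C^{4,\alpha}_\delta\to C^{0,\alpha}_{\delta-4}$ Fredholm, with relative index contribution $1$ at each end, for a total of $k$. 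A standard jump-of-index argument relates the kernel in this weighted space to the $L^2$ kernel, which vanishes by the nondegeneracy hypothesis, so only the cokernel can obstruct surjectivity.

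Second, I would enlarge the domain by a $k$-dimensional geometric deformation space $W=\vspan\{\phi_1,\dots,\phi_k\}$, where each $\phi_i$ is a cutoff of the variation of the model Delaunay end at $p_i$ with respect to its necksize parameter. A pairing argument between $W$ and the cokernel of the restricted Fredholm $L_g$, combined with the self-adjointness of $L_g$ and the vanishing of the $L^2$ kernel, shows that $L_g$ extended to $C^{4,\alpha}_\delta\oplus W$ is surjective onto $C^{0,\alpha}_{\delta-4}$ with a $k$-dimensional kernel.

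Finally, since $\mathcal{H}_{\overset{\circ}{g}}$ is polynomial, and hence real-analytic, in $w$ and its derivatives up to order four, the real-analytic implicit function theorem on Banach spaces produces a real-analytic map from a neighborhood of $0$ in the $k$-dimensional kernel into a complement in $C^{4,\alpha}_\delta\oplus W$ whose graph coincides with the local zero set of $\mathcal{H}_{\overset{\circ}{g}}$. Projecting this graph to $\mathcal{M}_\Lambda$ identifies an open neighborhood $\mathcal{U}$ of $g$ with an open subset of a real-analytic manifold of dimension $k$. The main obstacle, as usual, is the weighted linear theory for $L_g$: verifying Fredholmness via a parametrix adapted to the Delaunay ends, pinning down the indicial roots from the factorization \eqref{paneitz_op_sph}, and showing that Definition \ref{nodegen_defn} is the precise hypothesis which ensures that the extension by the geometric deformations $\phi_i$ wipes out the cokernel.
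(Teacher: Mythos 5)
Your overall strategy (weighted linear theory at the Delaunay ends, enlarge the domain by cutoff geometric Jacobi fields, then the analytic implicit function theorem) is the same road map the paper follows, but there is a genuine gap in the central step: your deformation space $W$ is only $k$-dimensional, consisting of the necksize variations alone, and with that choice the Fredholm bookkeeping cannot produce the conclusion you assert. At each puncture there are \emph{two} temperate Jacobi fields, $w_0^+(\varepsilon_i)$ (translation along the Delaunay axis) and $w_0^-(\varepsilon_i)$ (necksize variation), and the deficiency space $\mathcal{W}_g$ of Definition \ref{deficiency_defn} is correspondingly $2k$-dimensional; the relative index across the weight $0$ is $2$ per end (total $2k$), so the index on the decaying space is $-k$. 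By nondegeneracy the kernel of $L_g$ on the decaying space is trivial and its cokernel is $k$-dimensional (dual to the bounded null space $\mathcal{B}_g$). Adding a $k$-dimensional $W$ raises the index only to $0$: if the pairing with the cokernel were nondegenerate you would get a surjective operator with \emph{trivial} kernel, and the implicit function theorem would then yield an isolated point, not a $k$-manifold; if the pairing is degenerate (and there is no reason the cokernel pairs nondegenerately with necksize variations alone, since $\mathcal{B}_g$ sits as a Lagrangian subspace of $\mathcal{W}_g$ mixing $w_0^+$ and $w_0^-$), you lose surjectivity altogether. Your stated outcome, ``surjective with a $k$-dimensional kernel,'' is arithmetically impossible for a $k$-dimensional extension. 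The paper's proof uses the full $2k$-dimensional $\mathcal{W}_g$, so that the extended linearization $L_g: W^{4,2}_{-\delta}\oplus\mathcal{W}_g\to W^{0,2}_{-\delta-4}$ is surjective with kernel exactly $\mathcal{B}_g$, whose dimension $k$ is computed via Melrose's relative index theorem; this is what feeds the implicit function theorem.

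A secondary issue: you extend the operator additively by elements of $W$, but the necksize Jacobi field grows linearly in the cylindrical variable, so $u+\phi_i$ does not parametrize nearby Delaunay ends in a controlled way (and can even fail positivity near the puncture). The paper instead lets the deficiency parameters act nonlinearly, replacing the model $v_{\varepsilon_i}(-\log|\cdot-p_i|+T_i)$ by $v_{\varepsilon_i+a_i^-}(-\log|\cdot-p_i|+T_i+a_i^+)$ inside a cutoff region as in \eqref{deformation_space3}; this makes the error $\mathcal{H}(0,w)$ small and compactly supported, correctable by a decaying $v$, and guarantees that zeros of the extended map really are metrics in $\mathcal{M}_\Lambda$ with perturbed asymptotic data. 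Your choice of weighted H\"older rather than Sobolev spaces is harmless, but both of the above points need to be repaired for the argument to close.
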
 
As a part of proving these two local regularity theorems we construct 
a $2k$-dimensional parameter space $\mathcal{W}_g$ to 
parameterize all metrics $\mathcal{M}_\Lambda$ nearby a given $g$. Once we 
construct $\mathcal{W}_g$ and use it to understand the local regularity of 
the moduli space, we 
construct a symplectic structure on the geometric parameter space and  
show that, in the nondegenerate case, a small neighborhood 
of $g$ in $\mathcal{M}_\Lambda$ embeds in this parameter space 
$\mathcal{W}_g$ as a Lagrangian submanifold.

Our results form a natural progression of the current understanding 
of constant $Q$-curvature metrics. Previously, C. S. Lin \cite{Lin} 
showed that all smooth metrics with constant $Q$-curvature in the 
conformal class of the round metric must be the image of $\overset{\circ}{g}$ 
under a M\"obius transformation. In the language we established above, 
$\mathcal{M}_0 = SO(n+1,1)$, which is the M\"obius group of global conformal 
transformations of the sphere. In the same paper Lin proved there are no 
solutions with a single puncture, {\it i.e.} $\mathcal{M}_1 = \varnothing$ 
and that (after a conformal motion) any solution with two punctures is 
rotationally invariant. Afterwards Frank and K\"onig \cite{FK} characterized 
all the two ended solutions, showing $\mathcal{M}_{p,q} \simeq (0,\overline{\varepsilon}]$
for each $p \neq q$, where $\overline{\varepsilon}$ is a finite, positive 
number depending only on the dimension $n$. We describe these solutions 
in some detail below. In general, explicit gluing constructions demonstrate 
that the moduli space $\mathcal{M}_k$ is 
nonempty, provided $k \geq 2$. Baraket and Rebhi \cite{BR} constructed 
solutions with an even number of punctures by gluing cylinders together, 
using small necks as a bridge. Andrade, Wei and Ye \cite{del-ends2} 
construct many examples in the conformal class of the sphere and 
the authors of this paper together with Andrade and do \`O \cite{del-ends} 
construct many other examples in the inhomogeneous setting, using a different gluing 
technique. Together with Andrade and do \`O \cite{compactness}, the second author 
described a geometric characterization of compact subsets of the moduli space. 

The rest of the paper proceeds as follows. In Section \ref{sec:prelim} 
we discuss some analytic preliminaries, such as the Delaunay solutions, 
the local asymptotics of a singular Yamabe metric near a puncture 
and the appropriate functions spaces. In Section \ref{sec:lin_anal} 
we analyze the mapping properties of the linearized operator $L_g$ 
in various weighted function spaces and introduce the deficiency 
space $\mathcal{W}_g$, a $2k$-dimensional vector space that will serve as a 
parameter space to describe the asymptotic geometry of nearby metrics in 
$\mathcal{M}_\Lambda$. We prove Theorem \ref{thm_nondegen_case}
in Section \ref{sec:nondegen} and complete the proof of Theorem \ref{loc_struct_thm}
in Section \ref{sec:degen}. Finally, in Section \ref{sec:symp} we discuss 
a symplectic structure on the natural parameter space of 
$\mathcal{M}_\Lambda$ and prove that, near smooth points, 
the moduli space $\mathcal{M}_\Lambda$ is a Lagrangian submanifold of 
this parameter space.

\section{Preliminaries} \label{sec:prelim} 

\subsection{The choice of a gauge} \label{sec:reformulations} 

The choice of a gauge in formulating the moduli problem is 
equivalent to choosing the background metric in a conformal 
class. 

While we have thus far phrased this problem in the sphere, it will 
often be useful to rewrite in Euclidean space after stereographic 
projection and to transfer our analysis between the two settings. 
Let $\operatorname{Pr}: \R^n \rightarrow \Ss^n \backslash \{ N \}$ 
be (the inverse of) stereographic projection mapping Euclidean space 
to the sphere minus a pole. It is now a standard exercise to verify 
that 
$$\overset{\circ}{g} = u_{\rm sph}^{\frac{4}{n-4}} \delta, \qquad 
u_{\rm sph} (x) = \left ( \frac{1+|x|^2}{2} \right )^{\frac{4-n}{2}}, $$
where $\delta$ is the Euclidean metric. Using this transformation we can 
identify $g = U^{\frac{4}{n-4}} \overset{\circ}{g}$ with $g= u^{\frac{4}{n-4}} \delta$
where $u = U u_{\rm sph}$. We also denote the preimage of the singular 
set by $\widetilde \Lambda = \operatorname{Pr}^{-1} (\Lambda)$. Without 
loss of generality we let the north pole $N$ be a smooth point of $g$, 
so that the conformal factor $u$ decays at infinity. More precisely, 
$$\limsup_{|x| \rightarrow \infty} |x|^{\frac{n-4}{2}} u(x) < \infty.$$

\begin{remark} 
Hereafter we adopt the convention that capital letters will 
denote conformal factors relative the round metric and lower case 
letters will denote conformal factors relative to the Euclidean 
metric. The two are always related as described above, {\it e.g.} 
$u=U u_{\rm sph}$. 
\end{remark}

Furthermore, we can also rephrase the condition that a metric 
lies in the moduli space $\mathcal{M}_\Lambda$ in the language of 
PDEs. Recall that $g \in [ \overset{\circ}{g}]$ precisely when 
$g = U^{\frac{4}{n-4}} \overset{\circ}{g}$, so that the condition 
$Q_g = \frac{n(n^2-4)}{8}$ becomes 
$$ \mathcal{H}_{\overset{\circ}{g}} (U) = P_{\overset{\circ}{g}}(U) 
- \frac{n(n-4)(n^2-4)}{16} U^{\frac{n+4}{n-4}}=0.$$
Thus 
$$\mathcal{M}_\Lambda = \left \{ U : \Ss^n \backslash \Lambda \rightarrow (0,\infty) : 
\mathcal{H}_{\overset{\circ}{g}} (U) = 0 \mbox{ and } \liminf_{p \rightarrow \Lambda} U(p) 
= \infty \right \}.  $$

In the spherical setting, the linearized operator has the 
form 
\begin{equation} \label{lin_op_spherical} 
L_g (v)  =  \left. \frac{d}{dt} \right |_{t=0} \mathcal{H}_{\overset{\circ}{g}} (U + tv) 
= P_{\overset{\circ}{g}}(v) - \frac{n(n+4)(n^2-4)}{16} U^{\frac{8}{n-4}} v,
\end{equation} 
where the Paneitz operator $P_{\overset{\circ}{g}}$ can be factor as in \eqref{paneitz_op_sph}.

The operators in question have an even simpler appearance in the Euclidean 
setting. This time we use the fact that $g = u^{\frac{4}{n-4}} \delta$, 
so $g \in \mathcal{M}_g$ is now equivalent to the PDE 
$$\mathcal{H}_\delta (u) = (-\Delta_0)^2 u - \frac{n(n-4)(n^2-4)}{16} 
u^{\frac{n+4}{n-4}}=0, $$
which in turn implies 
$$\mathcal{M}_\Lambda = \left \{ u : \R^n \backslash \widetilde \Lambda 
\rightarrow (0,\infty) : \mathcal H_\delta(u)=0, 
\, \liminf_{x \rightarrow \widetilde \Lambda} u(x) = \infty\mbox{ and } 
\limsup_{|x| \rightarrow \infty} |x|^{\frac{n-4}{2}} u(x) < \infty \right \}. $$

In the Euclidean setting the linearized operator has the form 
\begin{equation} \label{lin_op_eucl} 
L_g(v)  =  (-\Delta_0)^2 v - \frac{n(n+4)(n^2-4)}{16} u^{\frac{8}{n-4}} v. 
\end{equation} 

In either setting, we refer to a function satisfying the PDE $L_g(v) = 0$ 
as a Jacobi field. 

\subsection{Delaunay metrics}

The Delaunay metrics are all the constant $Q$-curvature metrics on a 
twice-punctured sphere and, as we will see later, play an important 
role in understanding the behavior of singular constant $Q$-curvature 
metrics with isolated singularities. 

Consider a metric $g = U^{\frac{4}{n-4}} \overset{\circ}{g}$ on
$\Ss^n \backslash \{ p,q\}$ where $p$ and $q$ are distinct. After 
a rotation and a dilation, we can assume $p = N$ is the north pole and 
$q=S$ is the south pole. As in the previous section, we transfer 
now $\R^n \backslash \{ 0 \}$ using stereographic projection and 
let $u = U u_{\rm sph}$. 
Using \eqref{trans_law1} we see that $u: \R^n \backslash \{ 0 \} \rightarrow (0,\infty)$ satisfies 
\begin{equation} \label{del_pde_eucl_coords} 
\mathcal H_\delta(u)=0.
\end{equation} 

Frank and K\"onig \cite{FK} classified 
all the solutions of \eqref{del_pde_eucl_coords}, and we describe them 
here. First we perform the Emden-Fowler change of 
coordinates, defining 
\begin{equation} \label{emden_fowler_coords} 
\mathfrak{F} : \mathcal{C}^\infty (\B_r(0) \backslash \{ 0 \}) 
\rightarrow \mathcal{C}^\infty ((-\log r, \infty) \times \Ss^{n-1}), 
\qquad \mathfrak{F}(u) (t,\theta) = e^{\frac{4-n}{2}t} u(e^{-t}\theta). 
\end{equation} 
We can of course invert $\mathfrak{F}$, obtaining 
$$\mathfrak{F}^{-1}(v) (x) = |x|^{\frac{4-n}{2}} v(-\log |x|, \theta).$$
While the prefactor of $e^{\frac{4-n}{2}t}$ might 
look a little strange at first, a short computation shows it is geometrically 
necessary. Letting 
$$\Upsilon: \R \times \Ss^{n-1} \rightarrow \R^n \backslash 
\{ 0 \} , \qquad \Upsilon (t,\theta) = e^{-t} \theta,$$
we see 
$$\Upsilon^* (\delta)  = e^{-2t} g_{\rm cyl}.$$ 
where $g_{\rm cyl}=dt^2+d\theta^2$ is the cylindrical metric. If we now consider a conformal metric $g= u^{\frac{4}{n-4}} \delta$, we 
see that 
$$\Upsilon^* (g)(t,\theta)  =  \mathfrak{F}(u) (t,\theta)^{\frac{4}{n-4}}
g_{\rm cyl} .$$

After the Emden-Fowler change of coordinates, using \eqref{trans_law1}, \eqref{del_pde_eucl_coords} becomes
\begin{equation} \label{del_pde_cyl_coords} 
\mathcal H_{\rm cyl} (v)=P_{\rm cyl} (v) - \frac{n(n-4)(n^2-4)}{16} v^{\frac{n+4}{n-4}}=0,
\end{equation}
where $v : \R \times \Ss^{n-1} \rightarrow (0,\infty)$ and
\begin{align}
    \label{cyl_paneitz} 
P_{\rm cyl}  & = (-\Delta_{\rm cyl})^2 - \frac{n(n-4)}{2} \Delta_{\rm cyl} 
- 4 \partial_t^2 + \frac{n^2(n-4)^2}{16}\\
& = \partial_t^4 + \Delta_\theta^2+ 2\Delta_\theta \partial_t^2
-\frac{n(n-4)+8}{2}  \partial_t^2 - \frac{n(n-4)}{2} \Delta_\theta + \frac{n^2(n-4)^2}{16}\nonumber
\end{align} is the Paneitz operator of the cylindrical 
metric. Note that $\Delta_{\rm cyl} = 
\partial_t^2 + \Delta_{\Ss^{n-1}}$. C. S. Lin \cite{Lin} used a
moving planes argument to prove that solutions 
of \eqref{del_pde_eucl_coords} are rotationally invariant, 
reducing \eqref{del_pde_cyl_coords} to the ODE 
\begin{equation} \label{del_ode} 
\ddddot v -  \frac{n(n-4)+8}{2} \ddot v + \frac{n^2(n-4)^2}{16} 
v - \frac{n(n-4)(n^2-4)}{16} v^{\frac{n+4}{n-4}}=0. 
\end{equation} 
Notice that one can find a first integral for
this ODE defined as
\begin{equation}\label{eq002}
    \mathcal{H}_\varepsilon = - \dot v_\varepsilon\dddot v_\varepsilon + \frac{1}{2} \ddot v_\varepsilon^2
+  \frac{n(n-4)+8}{4} \dot v_\varepsilon^2- \frac{n^2(n-4)^2}{32} 
v_\varepsilon^2 + \frac{(n-4)^2(n^2-4)}{32} v_\varepsilon^{\frac{2n}{n-4}}.
\end{equation}
We denote the nonzero constant solution of \eqref{del_ode} by 
$$\overline{\varepsilon} = \left ( \frac{n(n-4)}{n^2-4} \right )^{\frac{n-4}{8}}. $$

\begin{theorem} [Frank and K\"onig \cite{FK}] 
For each $\varepsilon \in (0,\overline{\varepsilon}]$ there exists a 
unique $v_\varepsilon: \R \rightarrow (0,\infty)$ solving the 
ODE \eqref{del_ode} attaining its minimal value of $\varepsilon$ 
at $t=0$. All these solutions are periodic. 
Furthermore, let $v:\R \times \Ss^{n-1} \rightarrow (0,\infty)$ be a smooth 
solution of the PDE \eqref{del_pde_cyl_coords}. Then either 
$v(t,\theta) = (\cosh (t+T))^{\frac{4-n}{2}}$ for some $T \in \R$ 
or there exist $\varepsilon \in (0,\overline{\varepsilon}]$ and $T \in \R$
such that $v(t,\theta) = v_\varepsilon (t+T)$. 
\end{theorem}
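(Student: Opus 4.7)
The plan is to reduce the classification of PDE solutions to an ODE classification and then analyze the resulting scalar fourth-order equation via its Hamiltonian structure and symmetries. The PDE-to-ODE reduction is exactly the moving-planes result of C.~S.~Lin quoted just above the statement: any positive smooth solution of \eqref{del_pde_cyl_coords} depends only on the axial variable $t$, and after translation in $\theta$ satisfies \eqref{del_ode}. The three assertions of the theorem---existence and uniqueness of the one-parameter family $v_\varepsilon$, periodicity of each $v_\varepsilon$, and the trichotomy of positive solutions---thus all reduce to a study of orbits in the four-dimensional phase space $\R^4=\{(v,\dot v,\ddot v,\dddot v)\}$.

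The structural input is the first integral $\mathcal{H}$ of \eqref{eq002}, which I would first verify is constant along orbits by direct differentiation and substitution of \eqref{del_ode}. The positive equilibrium $v\equiv\overline{\varepsilon}$ linearizes to the characteristic equation $\lambda^4-\tfrac{n(n-4)+8}{2}\lambda^2-\tfrac{n^2(n-4)}{2}=0$, which has one real pair and one purely imaginary pair of roots; hence the equilibrium is a saddle--center, and a Lyapunov-center (or center-manifold) argument produces a one-parameter family of periodic orbits on the two-dimensional center manifold through $\overline{\varepsilon}$. This family constitutes the local portion of $\{v_\varepsilon\}$ near $\varepsilon=\overline{\varepsilon}$. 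To continue it down to $\varepsilon\to 0^+$ I would exploit the time-reversal symmetry $t\mapsto -t$ of \eqref{del_ode}: uniqueness forces the solution with initial data $(v(0),\dot v(0),\ddot v(0),\dddot v(0))=(\varepsilon,0,\alpha,0)$ to be even, and evaluating $\mathcal{H}$ at $t=0$ gives
\begin{equation*}
\mathcal{H} = \tfrac{1}{2}\alpha^2 - \tfrac{n^2(n-4)^2}{32}\varepsilon^2 + \tfrac{(n-4)^2(n^2-4)}{32}\varepsilon^{\frac{2n}{n-4}},
\end{equation*}
so that imposing the correct Hamiltonian level (namely the one on which $\varepsilon$ is the absolute minimum of $v$) fixes a unique $\alpha=\alpha(\varepsilon)\ge 0$, with $\alpha=0$ precisely at $\varepsilon=\overline{\varepsilon}$. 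Periodicity then comes from a standard reflection argument: once $\dot v$ vanishes at some first $t_+>0$, evenness of \eqref{del_ode} about $t_+$ and uniqueness yield $v(t)=v(2t_+-t)$, forcing the orbit to close with period $2t_+$.

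For the final classification of all smooth positive solutions of \eqref{del_ode} (and hence, via Lin's moving planes, of the PDE \eqref{del_pde_cyl_coords}) one uses the fact that $(\cosh(t+T))^{(4-n)/2}$ solves \eqref{del_ode} and has $\mathcal{H}=0$, making it a homoclinic orbit to the origin in the level set $\{\mathcal{H}=0\}$; this is the separatrix between the periodic orbits (with $\mathcal{H}\in[\mathcal{H}_{\mathrm{eq}},0)$) and the unbounded orbits. The main obstacle is this global phase-portrait analysis: a fourth-order Hamiltonian system with only one conserved quantity is generically non-integrable, so the argument must make essential use of the time-reversal symmetry, the sign structure of the effective potential, and careful asymptotic control both at $v=0$ and at $v=\overline{\varepsilon}$ to rule out chaotic or non-closed bounded orbits and conclude that every bounded positive orbit is either a translate of some $v_\varepsilon$ or the spherical bubble.
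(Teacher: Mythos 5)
The paper itself offers no proof of this statement: it is quoted verbatim from Frank--K\"onig \cite{FK}, so the only question is whether your sketch could stand in for their argument, and at the crucial points it cannot. The decisive gaps are in the ODE analysis. First, your periodicity step is not valid for a fourth-order equation: the time-reversal symmetry $t\mapsto -t$ together with uniqueness for the initial value problem yields $v(t)=v(2t_+-t)$ only if \emph{both} $\dot v(t_+)=0$ and $\dddot v(t_+)=0$; the fixed-point set of the reversor in the four-dimensional phase space is the two-dimensional section $\{\dot v=\dddot v=0\}$, and an orbit meeting the three-dimensional set $\{\dot v=0\}$ need not be symmetric about that time. Proving that every bounded positive orbit repeatedly returns to the symmetric section is a substantial part of the real proof, not a ``standard reflection argument.'' Second, your uniqueness argument is circular: by shooting only from data $(\varepsilon,0,\alpha,0)$ you have already assumed the solution is even about its minimum (a solution attaining its minimum at $t=0$ satisfies $\dot v(0)=0$, $\ddot v(0)\ge 0$, but $\dddot v(0)$ is a priori arbitrary, so there is a two-parameter family to rule out), and ``imposing the correct Hamiltonian level'' presupposes knowing the energy $\mathcal{H}_\varepsilon$ of the orbit with minimum $\varepsilon$, which is precisely part of what the classification must deliver; the energy identity at $t=0$ alone cannot fix $\alpha(\varepsilon)$.

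Third, the trichotomy itself --- that every smooth positive solution on the full cylinder is the constant, a translate of some $v_\varepsilon$, or the $(\cosh)^{(4-n)/2}$ orbit --- is exactly what you defer in your last paragraph as ``the main obstacle.'' Since the system is a four-dimensional reversible Hamiltonian flow with a single known first integral, bounded orbits need not be closed or symmetric on general grounds; one needs a priori bounds (positive solutions and their derivatives up to fourth order are bounded, and bounded away from $0$ unless the orbit is the homoclinic one), control of the sign and oscillation structure between critical points, and an argument identifying the cosh orbit as the unique non-constant solution on its energy level with the required asymptotics. These are the new ingredients of Frank--K\"onig's paper, and your proposal names them without supplying them. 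The parts you do carry out --- Lin's radial reduction, the verification that \eqref{eq002} is conserved, the saddle-center structure at $\overline{\varepsilon}$ and the local Lyapunov family --- are correct but constitute only the easy portion of the theorem.
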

Later in this paper we will use the fact that the set of Delaunay 
solutions is ordered by the Hamiltonian energy $\mathcal{H}$. In other 
words, $\mathcal{H}$ is a strictly decreasing function of the 
necksize $\varepsilon$. 

We can now write the Delaunay metric in Euclidean coordinates by 
reversing the coordinate transformation \eqref{emden_fowler_coords},
letting  
\begin{equation} \label{del_soln_eucl_coords} 
u_\varepsilon (x) = \mathfrak{F}^{-1}(v)(x) = |x|^{\frac{4-n}{2}} v_\varepsilon (-\log |x|), 
\qquad g_\varepsilon = u_\varepsilon^{\frac{4}{n-4}} \delta 
= v_\varepsilon^{\frac{4}{n-4}} g_{\rm cyl} .
\end{equation} 
The geometric formulation of the Frank-K\"onig classification 
now reads: if $g = U^{\frac{4}{n-4}} \overset{\circ}{g}$ is a constant 
$Q$-curvature metric on $\Ss^n \backslash \{ p,q\}$ then, after a 
global conformal transformation, 
either $g$ extends to smoothly to the round metric or $g$ is 
singular at both $p$ and $q$ and is the image of a Delaunay metric 
$g_\varepsilon$ after said conformal transformation. 

\subsection{Local asymptotics}

A metric $g=U^{\frac{4}{n-4}} \overset{\circ}{g} \in \mathcal{M}_k$ 
with constant $Q$-curvature and finitely many singular points has a 
definite asymptotic structure near each singular point. Let $p_i \in 
\Lambda$ be a singular point of $g$ and choose stereographic coordinates 
$x$ centered at $p_i$. With respect to these coordinates we have 
$g = u^{\frac{4}{n-4}} \delta = (Uu_{\rm sph})^{\frac{4}{n-4}} \delta$ 
there exist $\varepsilon \in (0,\overline{\varepsilon}]$, $R>0$, 
$a \in \R^n$ and $\beta >1$ so that
\begin{equation} \label{asymp1}
u(x) = R^{\frac{n-4}{2}}u_\varepsilon(Rx) + |x|^{\frac{4-n}{2}} \left ( 
\langle x,a \rangle \left ( \frac{n-4}{2} v_\varepsilon (-\log (R|x|))-\dot v_\varepsilon (-\log (R|x|)) 
 \right ) + \mathcal{O}(|x|^\beta) 
\right ) .\end{equation} 
This expansion combines the local asymptotic expansions in \cite{JX} and 
in \cite{jesse2020}. 
As is usually the case, the asymptotic expansion \eqref{asymp1} is more 
tractable in Emden-Fowler coordinates. The transformed function $v=\mathfrak{F}(u)$ 
satisfies the equation \eqref{del_pde_cyl_coords} on the 
half-infinite cylinder $(T_0,\infty) \times \Ss^{n-1}$ and the asymptotic 
expansion now reads 
\begin{equation} \label{asymp2}
v(t,\theta) = v_\varepsilon (t+T) + e^{-t} \langle a,\theta\rangle \left ( 
\frac{n-4}{2} v_\varepsilon(t+T)-\dot v_\varepsilon(t+T)\right ) 
+ \mathcal{O}(e^{-\beta t}), \end{equation}
where $T= -\log R$. 

These asymptotic expansions \eqref{asymp1} and \eqref{asymp2} allow 
us to define an asymptotes map
\begin{equation} \label{defn_asymp_map1}
\mathcal{A}: \mathcal{M}_\Lambda \rightarrow (0,\overline{\varepsilon}]^k 
\times \R^k, \qquad \mathcal{A}(g) = (\varepsilon_1, \dots, \varepsilon_k, T_1,
\dots, T_k), 
\end{equation} 
where $g = u^{\frac{4}{n-4}} \delta$ and 
\begin{equation} \label{defn_asymp_map2} 
u(x) \simeq \mathfrak{F}^{-1} (v_{\varepsilon_i} (-\log |x-p_i| + T_i)) 
\qquad \textrm{ near }p_i.
\end{equation} 
We will see later on, in the proofs of Theorems \ref{loc_struct_thm} and \ref{thm_nondegen_case}, 
that the asymptotes maps provides us with local coordinates for the 
moduli space in the nondegenerate setting. 

\subsection{Weighted function spaces}

We perform most of our analysis below on weighted Sobolev 
spaces. We first define these weighted spaces on a half-infinite 
cylinder, and then transfer the definition to a punctured ball (and thereafter 
to a finitely punctured sphere) using the Emden-Fowler change of coordinates. 
\begin{definition} Let $\delta\in \R$ and let $v\in L^2_{\rm loc} 
((0,\infty) \times \Ss^{n-1})$. We say $v \in L^2_\delta ((0,\infty) 
\times \Ss^{n-1})$ if 
$$\| v\|^2_{L^2_\delta} = \int_0^\infty \int_{\Ss^{n-1}} e^{-2\delta t} 
|v(t,\theta)|^2 d\theta dt < \infty.$$
One can similarly define the Sobolev spaces $W^{k,2}_\delta ((0,\infty) 
\times \Ss^{n-1})$ for any natural number $k$. 
\end{definition} 
Observe that if $|v(t,\theta)| \leq C e^{\widetilde{\delta} t}$ for each 
$\widetilde{\delta} < \delta$ then $v\in L^2_\delta((0,\infty) \times \Ss^{n-1})$. 
Next we undo the Emden-Fowler change of coordinates, letting $u = \mathfrak{F}^{-1}(v)$ 
to see 
\begin{eqnarray*} 
\int_{\Ss^{n-1}} \int_{t_1}^{t_2} e^{-2\delta t} |v(t,\theta)|^2 dt d\theta 
& = & -\int_{\Ss^{n-1}} \int_{e^{-t_1}}^{e^{-t_2}} r^{2\delta+n-5} |u(r\theta)|^2 
dr d\theta \\ 
& = & \int_{r_2 \leq |x| \leq r_1} |x|^{2\delta - 4} |u(x)|^2 d\mu(x),
\end{eqnarray*} 
where $r_1 = e^{-t_1}$ and $r_2 = e^{-t_2}$. Thus we have the following definition. 

\begin{definition} Let $\delta\in \R$, let $r > 0$ and let $u 
\in L^2_{\rm loc} (\mathbf{B}_r(0)\backslash \{ 0 \} )$. We say $u \in 
L^2_\delta (\mathbf{B}_r(0) \backslash \{ 0 \})$ 
if $$\| u \|^2_{L^2_\delta} = \int_{\mathbf{B}_r(0) \backslash \{ 0 \}}
|x|^{2\delta - 4} |u(x)|^2 d\mu < \infty.$$
More generally we let $\widetilde \Lambda \subset \R^n$ be a finite set and 
$u \in L^2_{\rm loc} (\R^n \backslash \widetilde \Lambda)$. We 
say $u \in L^2_\delta(\R^n \backslash \widetilde \Lambda)$ if 
$$\| u\|_{L^2_\delta} = \int_{\R^n \backslash \widetilde \Lambda} 
(\operatorname{dist}(x,\widetilde \Lambda))^{2\delta - 4} |u(x)|^2 d\mu < \infty .$$
\end{definition} 
Once again, we see that if $|u(x)| \leq C(\operatorname{dist}
(x,\widetilde\Lambda))^{2-\widetilde{\delta}}$ for each $\widetilde{\delta} < \delta$ 
then $u \in L^2_\delta(\R^n \backslash \widetilde \Lambda)$.

\section{Linear analysis} \label{sec:lin_anal}

\subsection{The linearization about a Delaunay solution}

Here we study the linearized operator about a Delaunay solution, which we 
denote as $L_\varepsilon$, and 
some of its mapping properties. 

Following Section 5.2 of \cite{del-ends} we write 
$$L_\varepsilon = (-\Delta_0)^2 - \frac{n(n+4)(n^2-4)}{16} 
u_\varepsilon^{\frac{8}{n-4}} $$
and promptly transform to Emden-Fowler coordinates using \eqref{emden_fowler_coords}, 
obtaining the operator $\mathcal{L}_\varepsilon$ defined by 
\begin{equation} \label{del_linearization1} 
\mathcal{L}_\varepsilon (w) (t,\theta) = e^{\frac{4-n}{2} t}
L_\varepsilon (\mathfrak{F}^{-1} (w)) \circ \Upsilon (t,\theta)= 
\mathfrak{F} (L_\varepsilon (\mathfrak{F}^{-1} (w))) (t,\theta). 
\end{equation} 
Some computation gives us 
\begin{eqnarray} \label{del_linearization2} 
\mathcal{L}_\varepsilon & = & P_{\rm cyl} - \frac{n(n+4)(n^2-4)}{16} 
v_\varepsilon^{\frac{8}{n-4}}\nonumber\\
& = & \partial_t^4 + \Delta_{\Ss^{n-1}}^2 + 
2\Delta_{\Ss^{n-1}} \partial_t^2 - \frac{n(n-4)}{2} \Delta_{\Ss^{n-1}} \\ \nonumber 
&&- \frac{n(n-4)+8}{2} \partial_t^2 + \frac{n^2(n-4)^2}{16} - \frac{n(n+4)(n^2-4)}{16} 
v_\varepsilon^{\frac{8}{n-4}}.
\end{eqnarray} 
Here $P_{\rm cyl}$ is given by \eqref{cyl_paneitz}. 

We isolate two specific Jacobi fields of a Delaunay solution: the Jacobi 
field $w_0^+(\varepsilon)$ generating translations along the axis and the Jacobi 
field $w_0^-(\varepsilon)$ generating changes to the necksize. In Emden-Fowler coordinates
these are given by 
\begin{equation} \label{geom_jac_fields}
w_0^+(\varepsilon) = \dot v_{\varepsilon} , \qquad w_0^-(\varepsilon) 
= \frac{d}{d\varepsilon}v_\varepsilon. \end{equation} 
Differentating the relation $v_\varepsilon (t+T_\varepsilon) = v_\varepsilon(t)$
it is straight-forward to verify that $w_0^+$ is bounded and periodic 
while $w_0^-$ grows linearly. The formulation of $w_0^\pm$ above 
is well-formed in the case that $\varepsilon < \overline{\varepsilon}$, but 
both Jacobi fields vanish in the cylindrical case. If $\varepsilon = 
\overline{\varepsilon}$ we define 
\begin{equation} \label{cyl_jac_fields}
w_0^+ = \sin(\sqrt{\mu} t), \qquad w_0^- = \cos(\sqrt{\mu} t), \qquad 
\mu = \frac{\sqrt{n^4-64n+64} - (n^2-4n+8)}{4}. \end{equation} 
The analysis in Proposition 1 of \cite{BR} shows these two 
Jacobi fields play the role of varying the necksize and translation 
parameter on the cylinder.

One can find the following results and their proofs 
in Section 3.6 of \cite{jesse2020}. 

We first write a Jacobi field in Fourier series. Recall that the 
$j$th eigenvalue of $-\Delta_{\Ss^{n-1}}$ is $\lambda_j = j(n-1+j)$ 
and it has multiplicity 
$$m_j = \binom{n-1+j}{j} + \binom{n-3+j}{j-2},$$
and so we can expand $w$ in Fourier series as 
$$w(t,\theta) = \sum_{j=0}^\infty \sum_{l=1}^{m_j} w_{j,l} (t) 
E_{j,l}(\theta),$$
where $\{E_{j,1}, \dots, E_{j, m_j}\}$ is an orthonormal basis of the 
eigenspace of $-\Delta_{\Ss^{n-1}}$ with eigenvalue $\lambda_j$.
Thus the restriction of the operator $\mathcal{L}_\varepsilon$ 
to the eigenspace 
$$\operatorname{Span} \{ E_{j,1}, \dots, E_{j, m_j}\}$$
is the ordinary differential operator 
$$\mathcal{L}_{\varepsilon, j}  =  \frac{d^4}{dt^4} - 
\frac{n(n-4)+8+4\lambda_j }{2}\frac{d^2}{dt^2} + 
\frac{n^2(n-4)^2}{16} + \frac{n(n-4)}{2}\lambda_j +\lambda_j^2 - 
\frac{n(n+4)(n^2-4)}{16} v_\varepsilon^{\frac{8}{n-4}}.$$
\begin{lemma} For each $j \geq 1$ we have $0 \not \in 
\operatorname{spec} (\mathcal{L}_{\varepsilon,j})$
\end{lemma}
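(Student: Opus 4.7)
The plan is to analyze $\mathcal{L}_{\varepsilon,j}$ via Floquet theory. Since $v_\varepsilon$ is periodic in $t$ with period $T_\varepsilon$, the coefficients of the fourth-order ODE $\mathcal{L}_{\varepsilon,j}$ are periodic, so the $L^2(\R)$-spectrum has band structure and $0 \in \operatorname{spec}(\mathcal{L}_{\varepsilon,j})$ if and only if the $4\times 4$ monodromy matrix has an eigenvalue on the unit circle, equivalently if and only if $\mathcal{L}_{\varepsilon,j} w = 0$ admits a nontrivial bounded solution on $\R$. The four Floquet multipliers come in reciprocal pairs $(\rho, \rho^{-1})$ because the ODE derives from a Hamiltonian (self-adjoint) system.

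The first step is to extract the geometric Jacobi fields. The conformal symmetries of $\Ss^n$ fixing both singular points (dilations and axial translations) produce only the $j=0$ Jacobi fields $w_0^\pm(\varepsilon)$, so they contribute nothing for $j\geq 1$. Transverse translations in $\R^n$, however, generate Jacobi fields in the $j=1$ eigenspace: differentiating $u_\varepsilon(x+s\,e_l)$ at $s=0$ and applying $\mathfrak{F}$ yields
\begin{equation*}
w_{1,l}(t,\theta) = E_{1,l}(\theta)\, e^{t}\Bigl(\tfrac{4-n}{2}v_\varepsilon(t)-\dot v_\varepsilon(t)\Bigr),
\end{equation*}
which grows exponentially in $t$. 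This exhibits a Floquet multiplier of $\mathcal{L}_{\varepsilon,1}$ with $|\rho|>1$; its reciprocal partner produces an exponentially decaying Jacobi field, and neither lies on the unit circle.

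It remains to rule out bounded solutions from the remaining two multipliers of $\mathcal{L}_{\varepsilon,1}$ and from all four multipliers of $\mathcal{L}_{\varepsilon,j}$ for $j\geq 2$. At the cylindrical limit $\varepsilon=\overline{\varepsilon}$, where $v_\varepsilon$ is constant, $\mathcal{L}_{\overline{\varepsilon},j}$ has constant coefficients and its indicial polynomial can be computed in closed form; a direct factorisation shows that for each $j\geq 1$ all four indicial roots are real and nonzero, so $0\notin\operatorname{spec}(\mathcal{L}_{\overline{\varepsilon},j})$. For general $\varepsilon\in(0,\overline{\varepsilon}]$ the Floquet multipliers depend continuously on $\varepsilon$ and can only enter the unit circle by crossing $\rho=\pm 1$ through a Hamiltonian collision of reciprocal pairs. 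For $j\geq 2$ the constant term of $\mathcal{L}_{\varepsilon,j}$ dominates the uniformly $C^0$-bounded $v_\varepsilon$-perturbation, so a standard perturbation argument from the cylindrical case suffices. For $j=1$ one uses the two pinned translation multipliers together with an explicit analysis of the remaining reciprocal pair.

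The main obstacle is the $j=1$ case. Unlike $j\geq 2$, where the dominant constant term keeps the perturbation weak, $\mathcal{L}_{\varepsilon,1}$ genuinely carries the geometric content of transverse translations, and the factorization of $P_{\overset{\circ}{g}}$ on the round sphere cannot be exploited as a shortcut since this factorization does not survive linearization about a non-round Delaunay solution. One must therefore track all four Floquet multipliers directly as $\varepsilon$ varies, which is the technical heart of the analysis.
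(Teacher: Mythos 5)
Your outline stalls exactly where the lemma has content. For the paper this statement is quoted from Section 3.6 of \cite{jesse2020} rather than proved in the text, so you needed to supply a complete argument; what you give is a reduction plus a promissory note. Exhibiting the translation field $\mathfrak{F}(\partial_l u_\varepsilon)=E_{1,l}(\theta)\,e^{t}\bigl(\tfrac{4-n}{2}v_\varepsilon-\dot v_\varepsilon\bigr)$ (and its reflection $t\mapsto -t$) pins only two of the four Floquet multipliers of $\mathcal{L}_{\varepsilon,1}$ off the unit circle; the lemma is precisely the assertion that the remaining reciprocal pair is also off the circle, and at that point you write that one must ``track all four Floquet multipliers directly as $\varepsilon$ varies'' --- i.e.\ the decisive step is named but not carried out. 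Moreover, the continuation-from-the-cylinder strategy is not sound as stated. First, for a symplectic monodromy a complex quadruplet $\{\rho,\bar\rho,\rho^{-1},\bar\rho^{-1}\}$ can land on the unit circle at any pair of conjugate points, not only at $\pm 1$, so ``multipliers can only enter through $\rho=\pm1$'' is false in general, and continuity from $\varepsilon=\overline{\varepsilon}$ by itself excludes nothing. Second, $v_\varepsilon$ is not a uniformly small perturbation of the constant solution: as $\varepsilon\to 0$ the Delaunay solution degenerates toward a string of spherical bubbles, so a ``perturbation argument from the cylindrical case'' cannot reach all $\varepsilon\in(0,\overline{\varepsilon}]$.

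For $j\ge 2$ your instinct that the constant term dominates can be made rigorous, but by coercivity rather than perturbation: since $\lambda^2+\tfrac{n(n-4)}{2}\lambda+\tfrac{n^2(n-4)^2}{16}=\bigl(\lambda+\tfrac{n(n-4)}{4}\bigr)^2$ and the sign of the Hamiltonian energy \eqref{eq002} gives $\sup_t v_\varepsilon^{8/(n-4)}<\tfrac{n^2}{n^2-4}$, one checks that for $\lambda_j\ge\lambda_2=2(n+1)$ the zeroth-order coefficient of $\mathcal{L}_{\varepsilon,j}$ is bounded below by a positive constant uniformly in $\varepsilon$, so the quadratic form $\int \ddot w^2+a_j\dot w^2+c_j(t)w^2$ is positive definite and $0\notin\operatorname{spec}(\mathcal{L}_{\varepsilon,j})$. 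But this same computation shows why $j=1$ cannot be disposed of this way: $\bigl(\lambda_1+\tfrac{n(n-4)}{4}\bigr)^2=\tfrac{n^4}{16}$ is strictly smaller than the supremum of the potential term for small $\varepsilon$, so positivity fails and some genuinely different argument (the one in the cited reference, which analyzes the $j=1$ solution space directly) is required. As submitted, the proposal proves the easy modes and leaves the hard mode --- the actual substance of the lemma --- unproved.
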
 

The two functions $w_0^\pm(\varepsilon)$ described above both lie in the 
kernel of $\mathcal{L}_{\varepsilon,0}$, and so $0 \in 
\operatorname{spec}(\mathcal{L}_{\varepsilon,0})$ for each 
$\varepsilon \in (0,\overline{\varepsilon}]$. 

For proof of the next proposition see \cite[Proposition 28]{jesse2020}.

\begin{proposition} \label{prop:del_fredholm} 
There exists a discrete set of real numbers 
\begin{equation} \label{del_indicial_roots} 
\Gamma_\varepsilon = \{ \dots, -\gamma_2(\varepsilon) < 
-\gamma_1(\varepsilon) < 0 < \gamma_1(\varepsilon)  < \gamma_2(\varepsilon), 
\dots \}
\end{equation}
with $\gamma_j(\varepsilon) \rightarrow \infty$ as $j \rightarrow \infty$ 
such that the operator 
$$\mathcal{L}_\varepsilon : W^{4,2}_\delta((0,\infty) \times \Ss^{n-1}) 
\rightarrow L^2_\delta((0,\infty) \times \Ss^{n-1}) $$
is Fredholm provided $\delta \not \in \Gamma_\varepsilon$. In particular, 
for any $\delta \in (0,\gamma_1(\varepsilon))$ 
$$\mathcal{L}_\varepsilon : W^{4,2}_{-\delta} ((0,\infty) \times \Ss^{n-1}) 
\rightarrow L^2_{-\delta} ((0,\infty) \times \Ss^{n-1}) $$
is injective and 
$$\mathcal{L}_\varepsilon : W^{4,2}_\delta ((0,\infty) \times \Ss^{n-1}) 
\rightarrow L^2_\delta ((0,\infty) \times \Ss^{n-1})$$
is surjective. 
\end{proposition}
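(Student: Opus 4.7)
The plan is to apply the standard Lockhart--McOwen/Melrose Fredholm machinery for elliptic operators with asymptotically periodic coefficients on a cylindrical end, as executed in detail in \cite[Proposition~28]{jesse2020}. The first step is to decompose $w=\sum_{j,l}w_{j,l}(t)E_{j,l}(\theta)$ in spherical harmonics, so that $\mathcal{L}_\varepsilon$ acts diagonally through the fourth-order ODE $\mathcal{L}_{\varepsilon,j}$ whose only non-constant coefficient is the $T_\varepsilon$-periodic function $v_\varepsilon^{8/(n-4)}$. Floquet theory then produces, for each $j$, four characteristic exponents $\mu_k^{(j)}\in\mathbf{C}$ so that every solution of $\mathcal{L}_{\varepsilon,j}u=0$ has the form $\sum_k c_k e^{\mu_k^{(j)} t}\phi_k^{(j)}(t)$ with $\phi_k^{(j)}$ periodic (with polynomial corrections when exponents collide, as happens at $\mu=0$ in mode $j=0$ on account of the Jordan pair $w_0^\pm(\varepsilon)$ from \eqref{geom_jac_fields} and \eqref{cyl_jac_fields}). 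I would define $\Gamma_\varepsilon$ to be the set of real parts of all $\mu_k^{(j)}$ across $j\ge 0$. Discreteness is immediate: for $j$ large the zero-order coefficient $\lambda_j^2\sim j^4$ dominates, and a perturbation argument gives $|\mathrm{Re}(\mu_k^{(j)})|\sim\sqrt{\lambda_j}$, so any bounded subset of $\mathbf{R}$ contains exponents from only finitely many modes, each contributing at most four.

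The positivity of $\gamma_1(\varepsilon)$ is the key separation and follows from combining the lemma preceding the proposition with the symmetry of $\mathcal{L}_{\varepsilon,j}$ under $t\mapsto -t$, which, because the operator has real coefficients and no odd-order derivatives, pairs exponents as $\mu\leftrightarrow -\mu$. For $j\ge 1$, no $\mu_k^{(j)}$ has vanishing real part by the lemma. For $j=0$, the Jacobi fields $w_0^\pm(\varepsilon)$ account for $\mu=0$ as a Jordan block of size two, and the remaining pair $\pm\gamma_1^{(0)}(\varepsilon)$ is real and nonzero (a direct ODE computation carried out in \cite{jesse2020} and facilitated by the first integral $\mathcal{H}_\varepsilon$ of \eqref{eq002}). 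With $\Gamma_\varepsilon$ in hand, the Fredholm property for $\delta\not\in\Gamma_\varepsilon$ follows by conjugating with $e^{\delta t}$ to reduce to an unweighted setting whose shifted characteristic exponents miss the imaginary axis, then building a mode-by-mode parametrix via the Mellin/Fourier--Laplace inverse of each Floquet-nondegenerate $\mathcal{L}_{\varepsilon,j}$, summing and patching with standard interior elliptic regularity near $t=0$ to produce left and right inverses modulo compact operators.

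For the final injectivity/surjectivity statement with $\delta\in(0,\gamma_1(\varepsilon))$, I would first establish injectivity on $W^{4,2}_{-\delta}$: any kernel element decomposes in Fourier modes, each $w_{j,l}$ is a linear combination of Floquet modes of $\mathcal{L}_{\varepsilon,j}$, and the $L^2_{-\delta}$ decay annihilates the coefficient of every Floquet component whose exponent has real part greater than $-\delta$. Since $\delta<\gamma_1(\varepsilon)$, this kills all nondecaying Floquet modes from $j=0$ (the Jordan block at $0$ and the $+\gamma_1^{(0)}$ mode) and all modes from $j\ge 1$ with positive real part, and a careful Floquet analysis together with the weighted Sobolev structure at the finite end $t=0$ (as worked out in \cite{jesse2020}) rules out the remaining purely-decaying components, giving $w\equiv 0$. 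Surjectivity on $W^{4,2}_\delta$ then follows by duality: $\mathcal{L}_\varepsilon$ is formally $L^2$-self-adjoint, and the natural pairing $L^2_\delta\times L^2_{-\delta}\to\mathbf{R}$ identifies the cokernel of $\mathcal{L}_\varepsilon:W^{4,2}_\delta\to L^2_\delta$ with the kernel of $\mathcal{L}_\varepsilon$ on $W^{4,2}_{-\delta}$, which we have just shown is trivial. The main obstacle will be the injectivity step, both in verifying that the remaining pair of mode-$0$ Floquet exponents is real and nonzero (so that $\gamma_1(\varepsilon)>0$) and in correctly handling the presence of the finite end at $t=0$ so that no exponentially decaying solution escapes the analysis — the delicate content of \cite[Proposition~28]{jesse2020}.
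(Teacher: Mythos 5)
The paper itself offers no proof of this proposition; it defers entirely to \cite[Proposition 28]{jesse2020}, so there is no in-paper argument to compare against line by line. Your outline --- spherical-harmonic decomposition into the ODE operators $\mathcal{L}_{\varepsilon,j}$, Floquet exponents of the periodic-coefficient equations as the indicial roots $\pm\gamma_j(\varepsilon)$, growth of the exponents in $j$ for discreteness, conjugation by $e^{\delta t}$ plus a mode-by-mode parametrix for the Fredholm property, and duality for surjectivity --- is the standard Lockhart--McOwen/Melrose route, and it is consistent with the infrastructure the paper sets up (the Fourier decomposition, the lemma that $0\notin\operatorname{spec}(\mathcal{L}_{\varepsilon,j})$ for $j\geq 1$, and the later use of formal self-adjointness). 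In that sense you are following the same proof strategy as the cited source.

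There is, however, one step that would fail as you describe it: injectivity on $W^{4,2}_{-\delta}$. On $(0,\infty)\times\Ss^{n-1}$ with the bare weighted spaces defined in the paper (no boundary condition at $t=0$), Floquet bookkeeping cannot give injectivity, because for every $j\geq 1$ the fourth-order ODE $\mathcal{L}_{\varepsilon,j}w=0$ has a two-dimensional space of solutions decaying at $t\to+\infty$ like $e^{-\gamma_1(\varepsilon)t}$ or faster; each such solution times $E_{j,l}$ is smooth up to the finite end and hence lies in $W^{4,2}_{-\delta}$, so ``the weighted Sobolev structure at $t=0$'' excludes nothing. Ruling these out requires extra input: either the statement must be read on the bi-infinite cylinder $\R\times\Ss^{n-1}$, where injectivity amounts to the nondegeneracy of the Delaunay solution (absence of decaying Jacobi fields, a genuine theorem, not bookkeeping), or one must pose and analyze a boundary value problem at the finite end --- and the same caveat affects your duality argument for surjectivity, since integration by parts on a half-cylinder produces boundary terms that must be handled before the cokernel of $\mathcal{L}_\varepsilon$ on $W^{4,2}_{\delta}$ can be identified with the kernel on $W^{4,2}_{-\delta}$. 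You flag this step as delicate and defer it to \cite{jesse2020}, which is also what the paper does, but you should state precisely which formulation you are proving and where the additional input enters; as written the key assertion is named rather than proved. A minor inaccuracy: at $\varepsilon=\overline{\varepsilon}$ the mode-zero solutions \eqref{cyl_jac_fields} correspond to purely imaginary Floquet exponents $\pm i\sqrt{\mu}$, not a Jordan block at $0$ (that Jordan structure occurs only for $\varepsilon<\overline{\varepsilon}$), though the conclusion that $0$ is an indicial root is unaffected.
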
 
One calls $\gamma_j(\varepsilon)$ the $j$th indicial root of the Jacobi 
operator $\mathcal{L}_\varepsilon$ and $\Gamma_\varepsilon$ the set of 
indicial roots associated to the Delaunay solution $v_\varepsilon$. 

\begin{proposition} \label{prop:jac_asymp_expansion} 
Let $\phi\in\mathcal{C}^\infty_0 ((0,\infty) \times \Ss^{n-1})$
and let $\mathcal{L}_\varepsilon(v) = \phi$. Then $v$ satisfies the asymptotic 
expansion $v(t,\theta) \simeq \sum_{j=0}^\infty v_j(t,\theta)$ where each 
$v_j$ is a Jacobi field, {\it i.e.} $L_\varepsilon (v_j) = 0$, and 
$v_j$ decays like a polynomial times $e^{-\gamma_j t}$, where $\gamma_j >0$ 
is the $j$th indicial root. 
\end{proposition}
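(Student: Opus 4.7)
The approach is to reduce to an ODE analysis mode-by-mode via Fourier decomposition on $\Ss^{n-1}$, apply Floquet theory to exploit the periodicity of $v_\varepsilon$ in $t$, and then peel off the asymptotic terms one indicial root at a time.

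First I would expand $v(t,\theta) = \sum_{j,l} v_{j,l}(t) E_{j,l}(\theta)$ and $\phi(t,\theta) = \sum_{j,l}\phi_{j,l}(t) E_{j,l}(\theta)$. Since $\mathcal{L}_\varepsilon$ preserves each spherical eigenspace, each $v_{j,l}$ satisfies the ordinary differential equation $\mathcal{L}_{\varepsilon,j} v_{j,l} = \phi_{j,l}$, which is homogeneous for $t > T_0$ outside the support of $\phi$. Because $v_\varepsilon^{8/(n-4)}$ is periodic in $t$ with period $T_\varepsilon$, the coefficients of $\mathcal{L}_{\varepsilon,j}$ are periodic, so Floquet/Bloch theory describes the four-dimensional kernel on the tail: it is spanned by functions of the form $e^{\lambda t} p(t)$, possibly with polynomial-in-$t$ corrections in resonance cases, where $p$ has the same period as $v_\varepsilon$. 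The exponents $\pm\lambda$ that arise as $j$ varies are precisely the indicial roots listed in $\Gamma_\varepsilon$ of Proposition \ref{prop:del_fredholm}.

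Next, invoking Proposition \ref{prop:del_fredholm} with a large positive weight $\delta$ one may assume $v$ lies in $W^{4,2}_\delta$ modulo an explicit Jacobi field correction, so the only Floquet modes that can appear in $v_{j,l}$ on the tail are the decaying ones. Ordering the positive indicial roots as $0 < \gamma_1 < \gamma_2 < \cdots$, I group the decaying Floquet pieces of $v_{j,l}$ by their exponential rate and define
\begin{equation*}
v_k(t,\theta) \;=\; \sum_{j,l} \widetilde v_{j,l,k}(t)\, E_{j,l}(\theta),
\end{equation*}
where $\widetilde v_{j,l,k}$ is the $\gamma_k$-piece of $v_{j,l}$ continued from the tail to all of $\R \times \Ss^{n-1}$ as a solution of the homogeneous ODE. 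By construction $\mathcal{L}_\varepsilon v_k = 0$, so $v_k$ is a Jacobi field, and it decays like a polynomial in $t$ times $e^{-\gamma_k t}$. The finiteness of spherical modes contributing below any fixed rate (since $\gamma_j(\varepsilon)\to\infty$) makes each $v_k$ a finite sum in $(j,l)$, hence smooth.

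Finally, I would control the remainder by iteration. Having defined $v_1,\dots,v_N$, the difference $r_N := v - \sum_{k=1}^N v_k$ satisfies $\mathcal{L}_\varepsilon r_N = \phi$ on the cylinder, is supported (modulo the source) in the tail, and by construction has no Floquet contribution at rates $\gamma_1,\dots,\gamma_N$; Proposition \ref{prop:del_fredholm} applied with weight $\delta$ chosen between $-\gamma_{N+1}$ and $-\gamma_N$ forces $r_N$ to decay at least like a polynomial times $e^{-\gamma_{N+1} t}$. Fourth-order interior elliptic estimates on cylindrical annuli $[t,t+1]\times\Ss^{n-1}$ upgrade this $L^2$-weighted decay to $\mathcal{C}^k$-weighted decay at the same exponential rate, justifying the symbol $\simeq$ as an asymptotic expansion. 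The main technical obstacle is the uniformity of the Floquet decomposition across spherical modes and the bookkeeping of polynomial-in-$t$ prefactors whenever indicial roots from different modes coincide; this is tractable because only finitely many Fourier blocks contribute at each level of the truncation, so the resonant cases can be treated block by block.
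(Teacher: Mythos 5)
Your overall strategy---spherical Fourier decomposition, Floquet theory on each block using the periodicity of $v_\varepsilon$, and iterative peeling of terms associated to the indicial roots---is the classical alternative to the argument this paper actually leans on: the paper gives no proof of this proposition but cites Section 3.6 of \cite{jesse2020}, where the expansion comes out of the Fourier--Laplace transform \eqref{Fourier_Laplace_defn} and contour shifting past the poles of the meromorphic right inverse, each residue producing one Jacobi-field term (polynomial factors arising from higher-order poles). Within your route, though, there are two concrete gaps. The first concerns the growth hypothesis on $v$. With the paper's weight convention, $v\in W^{4,2}_{\delta}$ for \emph{large positive} $\delta$ permits growth up to nearly $e^{\delta t}$, so invoking Proposition \ref{prop:del_fredholm} at a large positive weight does \emph{not} rule out growing Floquet modes; indeed, with no temperedness assumption at all the statement is false, since one may add to $v$ a Jacobi field growing like $e^{\gamma_j t}$. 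What you need is the hypothesis under which the proposition is actually used (cf.\ the Linear Decomposition Lemma): $v\in W^{4,2}_{\delta}$ with $0<\delta<\gamma_1(\varepsilon)$, which in each Fourier block kills the growing Floquet solutions on the tail. Note also that the $j=0$ block contributes the span of $w_0^{\pm}(\varepsilon)$ from \eqref{geom_jac_fields}, which is bounded, respectively linearly growing, so the leading term of the expansion does not decay; your claim that every $v_k$ decays needs this exception.

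The second and more serious gap is the remainder estimate. You apply Proposition \ref{prop:del_fredholm} ``with weight between $-\gamma_{N+1}$ and $-\gamma_N$'' to conclude that $r_N=v-\sum_{k\le N}v_k$ decays at rate $\gamma_{N+1}$, but to invoke anything at that weight you must already know that $r_N$ lies in the corresponding weighted space---which is precisely the decay you are trying to prove; a Fredholm statement on the half-cylinder gives no a priori weighted bound on a given solution. The missing ingredient is uniformity over the infinitely many spherical modes entering $r_N$: one must show, with constants summable in $j$, that tempered homogeneous solutions of $\mathcal{L}_{\varepsilon,j}$ on the tail decay at the rate of the smallest positive indicial root of that block (which grows roughly linearly in $j$), so that the high-mode projection of $r_N$ obeys a single exponential bound. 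This can be done by energy or barrier arguments for the fourth-order ODEs uniformly in $j$, or avoided altogether by the resolvent/contour-shifting argument of \cite{jesse2020}, where the shifted contour integral automatically furnishes the $e^{-\gamma_{N+1}t}$ bound on the remainder. Your final step---upgrading weighted $L^2$ decay to pointwise decay by interior elliptic estimates on unit annuli of the cylinder---is fine once that bound is in hand.
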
 

\begin{corollary} [Linear Decomposition Lemma I]
Let $\delta \in (0,\gamma_1(\varepsilon))$, let $v \in W^{4,2}_\delta ((0, \infty) \times 
\Ss^{n-1})$ and let $\phi \in \mathcal{C}^\infty((0,\infty) \times 
\Ss^{n-1}) \cap L^2_{-\delta} ((0,\infty) \times \Ss^{n-1})$ be such that 
$\mathcal{L}_\varepsilon (v) = \phi$. Then there exist $z \in W^{4,2}_{-\delta} 
((0, \infty) \times \Ss^{n-1})$ and $w \in \operatorname{Span} (w_0^+(\varepsilon), 
w_0^-(\varepsilon))$  such that $v = z + w$. 
\end{corollary}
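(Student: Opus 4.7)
The plan is to combine the surjectivity of $\mathcal{L}_\varepsilon$ at weight $\delta$ from Proposition \ref{prop:del_fredholm} with a mode-by-mode asymptotic analysis in the spirit of Proposition \ref{prop:jac_asymp_expansion}, so as to identify the obstruction to $v$ lying in $W^{4,2}_{-\delta}$ as a linear combination of the geometric Jacobi fields $w_0^\pm(\varepsilon)$.

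First, I would expand $v$ and $\phi$ in spherical harmonics, writing $v = \sum_{j,l} v_{j,l}(t) E_{j,l}(\theta)$ and similarly for $\phi$, which reduces the equation $\mathcal{L}_\varepsilon v = \phi$ to the ODE system $\mathcal{L}_{\varepsilon, j} v_{j,l} = \phi_{j,l}$. For each $j \geq 1$ the lemma preceding Proposition \ref{prop:del_fredholm} gives $0 \notin \operatorname{spec}(\mathcal{L}_{\varepsilon,j})$, so the homogeneous equation $\mathcal{L}_{\varepsilon, j} u = 0$ admits an exponentially dichotomous pair of fundamental solutions whose exponents have modulus at least $\gamma_1(\varepsilon) > \delta$; a Green's function constructed from these yields a unique solution $v_{j,l} \in W^{4,2}_{-\delta}((0,\infty))$, and standard Fourier estimates (exploiting the decay gap $\gamma_1(\varepsilon) - \delta > 0$) show that $\sum_{j \geq 1, l} v_{j,l}(t) E_{j,l}(\theta) \in W^{4,2}_{-\delta}((0,\infty) \times \Ss^{n-1})$. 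The only obstruction to $v$ decaying therefore comes from the zero mode.

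For the zero mode $v_0 = v_{0,1} E_{0,1}$, the homogeneous ODE $\mathcal{L}_{\varepsilon, 0} u = 0$ has kernel $\operatorname{Span}(w_0^+(\varepsilon), w_0^-(\varepsilon))$ (bounded and linearly growing respectively), together with two further fundamental solutions of exponential growth/decay at rates $\pm \gamma_1(\varepsilon)$. Since $v_0 \in W^{4,2}_\delta$ with $\delta < \gamma_1(\varepsilon)$ rules out the exponentially growing solution, a variation of parameters argument (applied after approximating $\phi_0$ by compactly supported data and invoking Proposition \ref{prop:jac_asymp_expansion} to read off the leading asymptotics of each approximate solution) gives $v_0 = c_+ w_0^+(\varepsilon) + c_- w_0^-(\varepsilon) + \tilde z_0$ with $\tilde z_0 \in W^{4,2}_{-\delta}((0,\infty))$. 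Setting $w = c_+ w_0^+(\varepsilon) + c_- w_0^-(\varepsilon)$ and $z = v - w$ yields the claimed decomposition.

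The main technical difficulty is passing from the pointwise asymptotic expansion of Proposition \ref{prop:jac_asymp_expansion}, stated for compactly supported forcing, to a quantitative decomposition valid for arbitrary $\phi \in L^2_{-\delta}$; one handles this by a truncation and limit argument, invoking the Fredholm property at weight $-\delta$ from Proposition \ref{prop:del_fredholm} to secure uniform control on the decaying remainder $\tilde z_0$ and hence on the coefficients $c_\pm$ as $\phi$ is approximated in $L^2_{-\delta}$.
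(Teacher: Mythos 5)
The paper does not actually prove this corollary; it is quoted (together with Propositions \ref{prop:del_fredholm} and \ref{prop:jac_asymp_expansion}) from Section 3.6 of \cite{jesse2020}. Your mode-by-mode reconstruction is the standard route to such decomposition lemmas and is sound in outline: project onto spherical harmonics, use the absence of indicial roots in $(-\gamma_1(\varepsilon),\gamma_1(\varepsilon))$ for the modes $j\geq 1$ to conclude that those components of $v$ already lie in the small space, and isolate the obstruction in the $j=0$ mode, where the only homogeneous solutions of subexponential growth are $w_0^+(\varepsilon)$ and $w_0^-(\varepsilon)$. A few points deserve more care than your sketch gives them. First, the two remaining zero-mode fundamental solutions grow and decay at rates whose modulus is at least $\gamma_1(\varepsilon)$, but not necessarily equal to it, since $\gamma_1$ is the smallest positive element of the full indicial set $\Gamma_\varepsilon$; this does not affect the argument but should be stated correctly. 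Second, for $j\geq 1$ the Green's function does not produce a \emph{unique} solution in $W^{4,2}_{-\delta}((0,\infty))$ (decaying homogeneous solutions are not excluded on a half-line with free boundary data at $t=0$); what you actually need, and what the dichotomy gives, is that the \emph{given} mode $v_{j,l}\in W^{4,2}_\delta$ differs from a decaying particular solution by a homogeneous solution whose growing components are forbidden by the weight $\delta<\gamma_1$, hence lies in $W^{4,2}_{-\delta}$ near infinity. Third, the genuinely technical content is exactly what you defer: uniformity in $j$ when resumming the high modes (the constants in the mode-wise estimates must not degenerate, which one gets either from the monotone growth of the indicial roots or by treating all modes $j\geq1$ collectively via the invertibility of $\mathcal{L}_\varepsilon$ on the orthogonal complement of the zero mode), and the bookkeeping in the zero mode, where the linear growth of $w_0^-$ produces polynomial losses against the $e^{-\delta t}$ decay of $\phi_0$, so that the remainder naturally lands in $W^{4,2}_{-\delta'}$ for $\delta'<\delta$ unless one argues slightly more carefully. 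These are fixable and standard, so I regard your proposal as a correct sketch of the argument the paper outsources, rather than as containing a genuine gap.
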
 
For reasons that will become apparent later in the paper, we 
call $\mathcal{W}_\varepsilon = \operatorname{Span} (w_0^+(\varepsilon), 
w_0^-(\varepsilon))$ the deficiency space associated to the 
Delaunay metric with necksize $\varepsilon$. 

\subsection{The linearization about a singular Yamabe metric}

We transfer the mapping properties of the linearization about a 
Delaunay solution to study the mapping properties of $L_g$, where 
$g \in \mathcal{M}_\Lambda$ is a conformally flat, singular, 
constant $Q$-curvature metric with $k$ prescribed singularities. 
We denote the asymptotic necksize of the puncture $p_j$ by 
$\varepsilon_j$, and define the indicial set 
$$\Gamma_g = \bigcup_{i=1}^k \Gamma_{\varepsilon_i}.$$
It follows directly from Proposition \ref{prop:del_fredholm}
that 
$$L_g : W^{4,2}_\delta (\R^n \backslash \widetilde 
\Lambda) \rightarrow L^2_{\delta-4} (\R^n \backslash 
\widetilde \Lambda)$$
is Fredholm if and only if $\delta \not \in \Gamma_g$. 

\begin{definition} \label{deficiency_defn}
Let $g\in \mathcal{M}_\Lambda$ and choose $r_0>0$ sufficiently 
small such that $\mathbf{B}_{2r_0} (p_i) \cap \mathbf{B}_{2r_0}(p_j) 
= \varnothing$ for each distinct pair of punctures. We define the 
deficiency space $\mathcal{W}_g$ by 
$$\mathcal{W}_g = \operatorname{Span} \{ \chi \mathfrak{F}^{-1} (w_0^+(\varepsilon_i)), 
\chi \mathfrak{F}^{-1} (w_0^-(\varepsilon_i)): i=1, \dots, k\},$$
where $\chi$ is a fixed cut-off function such that 
$$\chi(x) = \left \{ \begin{array}{rl} 1 & |x| < r_0 \\ 0 & |x|> 3r_0/2 
\end{array} \right . , \qquad \|\nabla^k \chi \|_{\mathcal{C}^0} 
\leq c r^{-k}. $$
\end{definition}

\begin{proposition}[Linear decomposition lemma II] 
Let $0 < \delta < \min_{1 \leq i \leq k} \gamma_1(\varepsilon_i)$ and 
let $u\in W^{4,2}_\delta(\R^n \backslash \widetilde \Lambda)$ and 
$\phi \in L^2_{-\delta-4}(\R^n \backslash \widetilde \Lambda)$ satisfying 
$L_g(u) = \phi$. Then there exist $w \in \mathcal{W}_g$ and $v\in 
W^{4,2}_{-\delta}(\R^n \backslash \widetilde \Lambda)$ such that $u=w+v$. 
\end{proposition}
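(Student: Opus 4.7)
The strategy is to localize $u$ around each puncture, pass to the half-cylinders via the Emden--Fowler transformation, apply the Linear Decomposition Lemma I to each local piece, and then reassemble. I would begin by choosing cutoffs $\chi_i$ supported in $\B_{2r_0}(p_i)$ with $\chi_i \equiv 1$ on $\B_{r_0}(p_i)$, and setting $\chi_0 = 1 - \sum_{i=1}^{k} \chi_i$. The term $\chi_0 u$ is automatically in $W^{4,2}_{-\delta}(\R^n \setminus \widetilde\Lambda)$, since its support is bounded away from $\widetilde\Lambda$ and there the $L^2_\delta$ and $L^2_{-\delta}$ norms are comparable.

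Near each puncture, let $\mathcal{L}_g$ denote the cylindrical conjugate of $L_g$ by $\mathfrak{F}$. The next step is to transfer $\chi_i u$ to $\tilde u_i := \mathfrak{F}(\chi_i u)$ on $(T_i, \infty) \times \Ss^{n-1}$ and compute
\begin{equation*}
\mathcal{L}_{\varepsilon_i}(\tilde u_i) = \mathfrak{F}(\chi_i \phi) + \mathfrak{F}\bigl([L_g,\chi_i] u\bigr) + (\mathcal{L}_{\varepsilon_i} - \mathcal{L}_g)(\tilde u_i).
\end{equation*}
One then checks that each term on the right sits in $L^2_{-\delta}((0,\infty) \times \Ss^{n-1})$. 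For the first, the weight conversion from $L^2_{-\delta-4}(\R^n \setminus \widetilde\Lambda)$ near a puncture to $L^2_{-\delta}$ of the cylinder is the direct analog of the computation given in Section \ref{sec:prelim}. The second term is supported in the annulus where $d\chi_i \neq 0$, hence compactly supported on the cylinder and thus in every weighted space. For the third term I would invoke the asymptotic expansion \eqref{asymp2}, which shows that the potentials of $\mathcal{L}_g$ and $\mathcal{L}_{\varepsilon_i}$ differ by a function of order $O(e^{-\beta t})$ with $\beta > 1$; multiplying this by $\tilde u_i \in W^{4,2}_\delta$ produces an $L^2_{-\delta}$ contribution, using the hypothesis $\delta < \gamma_1(\varepsilon_i)$.

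With the cylindrical right-hand side in $L^2_{-\delta}$, the Linear Decomposition Lemma I yields $\tilde u_i = z_i + w_i$ with $z_i \in W^{4,2}_{-\delta}$ and $w_i \in \operatorname{Span}(w_0^+(\varepsilon_i), w_0^-(\varepsilon_i))$. Taking $\chi$ as in Definition \ref{deficiency_defn}, set
\begin{equation*}
w = \sum_{i=1}^k \chi\, \mathfrak{F}^{-1}(w_i) \in \mathcal{W}_g, \qquad v = u - w.
\end{equation*}
Away from the transition annuli, $v$ coincides with $\chi_0 u + \sum_i \mathfrak{F}^{-1}(z_i)$, and it differs from this sum only by smooth, compactly supported corrections coming from the mismatch between the $\chi_i$ and $\chi$; all pieces lie in $W^{4,2}_{-\delta}(\R^n \setminus \widetilde\Lambda)$, so $v$ does too.

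The main technical point in this plan is the perturbation $(\mathcal{L}_{\varepsilon_i} - \mathcal{L}_g)(\tilde u_i)$: it is here that one needs the asymptotic rate $\beta > 1$ of the background metric from \eqref{asymp2}, together with the smallness condition $\delta < \gamma_1(\varepsilon_i)$, to guarantee that the error remains inside the window where Lemma I applies. Everything else is routine bookkeeping with the cutoff functions and the Emden--Fowler coordinate change.
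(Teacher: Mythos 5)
The paper itself gives no proof of this proposition (it is the routine localization of Linear Decomposition Lemma I, whose proof is cited from \cite{jesse2020}), and your strategy --- cutoffs at each puncture, Emden--Fowler transfer, splitting the right-hand side into $\mathfrak{F}(\chi_i\phi)$, a commutator term, and the perturbation $(\mathcal{L}_{\varepsilon_i}-\mathcal{L}_g)(\tilde u_i)$, then reassembling --- is exactly the intended route. The gap is in the step you yourself flag as the main technical point. First, \eqref{asymp2} does not say that the two potentials differ by $\mathcal{O}(e^{-\beta t})$ with $\beta>1$: the expansion contains the first-order correction $e^{-t}\langle a,\theta\rangle\bigl(\tfrac{n-4}{2}v_{\varepsilon_i}-\dot v_{\varepsilon_i}\bigr)$, so in general $v-v_{\varepsilon_i}(\cdot+T_i)$, and hence $v^{\frac{8}{n-4}}-v_{\varepsilon_i}^{\frac{8}{n-4}}$, is only $\mathcal{O}(e^{-t})$. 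Second, even granting a pointwise bound $e^{-\beta t}$ on the potential difference, multiplying by $\tilde u_i\in W^{4,2}_{\delta}$ only places the product in $L^2_{\delta-\beta}$ of the cylinder, and $L^2_{\delta-\beta}\subset L^2_{-\delta}$ forces $2\delta\le\beta$; the hypothesis $\delta<\gamma_1(\varepsilon_i)$ is not the relevant inequality. With the true rate $\beta=1$ one needs $\delta\le 1/2$, whereas the proposition allows any $\delta<\gamma_1(\varepsilon_i)$, and $\gamma_1(\varepsilon_i)$ need not be $\le 1/2$: the rates $\pm1$ belong to $\Gamma_{\varepsilon_i}$ (they are the rates of the mode-one Jacobi fields visible in the $e^{-t}\langle a,\theta\rangle$ term of \eqref{asymp2}), and at the cylindrical value $\overline\varepsilon$ the smallest positive indicial root is exactly $1$. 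So for $\delta\in(1/2,\gamma_1(\varepsilon_i))$ your one-step application of Lemma I does not go through as written.

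The standard repair is a short bootstrap, using the two-weight refinement of Lemma I that follows from Proposition \ref{prop:jac_asymp_expansion}: if $\tilde u_i\in W^{4,2}_{\delta}$ and $\mathcal{L}_{\varepsilon_i}(\tilde u_i)\in L^2_{-\sigma}$ with $0<\sigma\le\delta<\gamma_1(\varepsilon_i)$, then $\tilde u_i=w_i+z_i$ with $w_i\in\operatorname{Span}(w_0^{\pm}(\varepsilon_i))$ and $z_i\in W^{4,2}_{-\sigma}$, because the only indicial root in $(-\sigma,\delta)$ is $0$. Apply this first with $\sigma$ slightly smaller than $\min(\delta,1-\delta)>0$; then re-estimate the perturbation term using $|w_i(t)|\le C(1+|t|)$ and the decay of $z_i$ to conclude that $(\mathcal{L}_{\varepsilon_i}-\mathcal{L}_g)(\tilde u_i)$, and hence the whole right-hand side, in fact lies in $L^2_{-\delta}$ (here one uses $\delta<\gamma_1(\varepsilon_i)\le1$), and run the decomposition once more to push the remainder into $W^{4,2}_{-\delta}$. (Alternatively, one can simply restrict to $\delta$ sufficiently small, which is all the later sections of the paper use.) The rest of your bookkeeping is fine; one small remark: on the support of $\chi_0$ the $L^2_{\delta}$ and $L^2_{-\delta}$ norms are not comparable near infinity, since the weights differ by a factor $\operatorname{dist}(x,\widetilde\Lambda)^{4\delta}$, but the one-sided inclusion $L^2_{\delta}\subset L^2_{-\delta}$ does hold on that region, which is all your argument needs.
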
 

We now define the bounded null space. Once again we 
fix a number $\delta$ such that $0 < \delta < \min_{1\leq i \leq k} 
\gamma_1(\varepsilon_i)$. Each element of the bounded
null space is, strictly speaking, an equivalence class of functions, 
that is
$$\mathcal{B}_g = \frac{\ker (L_g : W^{4,2}_\delta \rightarrow W^{0,2}_{\delta-4})}
{\ker(L_g: W^{4,2}_{-\delta} \rightarrow W^{0,2}_{-\delta-4})}.$$
Using the Hilbert space structure of $W^{k,2}_\delta$ we can 
identify 
$$\mathcal{B}_g \simeq \{ \ker(L_g: W^{4,2}_\delta\rightarrow 
W^{0,2}_{\delta-4})\} \cap \{ \ker(L_g:W^{4,2}_{-\delta} \rightarrow 
W^{0,2}_{-\delta-4}) \}^\perp.$$
Combining this characterization with the linear decomposition lemma we 
see that one can identify any $v = w+\phi$ for any $v\in \mathcal{B}$, 
where $w \in \mathcal{W}_g$ and $\phi \in W^{4,2}_{-\delta}(\Ss^n 
\backslash \Lambda)$ decays at each puncture. 

Applying Melrose's relative-index calculus we show 
the following dimension count.  
\begin{theorem} 
$\dim(\mathcal{B}_g) = k$
\end{theorem}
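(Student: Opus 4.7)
The plan is to combine Melrose's relative-index theorem with the formal self-adjointness of $L_g$ to express $\dim \mathcal{B}_g$ as a Fredholm index. Throughout I fix $\delta \in (0,\min_i \gamma_1(\varepsilon_i))$ so that, by Proposition \ref{prop:del_fredholm}, both $L_g : W^{4,2}_{\pm\delta}(\R^n\setminus\widetilde\Lambda) \to L^2_{\pm\delta - 4}(\R^n\setminus\widetilde\Lambda)$ are Fredholm.

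First, I would apply Melrose's relative-index calculus to the shift from weight $-\delta$ to weight $+\delta$. In this range, the only indicial root of the model operator $\mathcal{L}_{\varepsilon_i}$ at each puncture is $0$, and on the $j=0$ Fourier mode the associated two-dimensional space of indicial solutions is spanned by the deficiency vectors $w_0^\pm(\varepsilon_i)$ from \eqref{geom_jac_fields}--\eqref{cyl_jac_fields}; the higher Fourier modes $j\geq 1$ contribute nothing, since $0\notin\mathrm{spec}(\mathcal{L}_{\varepsilon_i,j})$. Summing the two-dimensional contribution over the $k$ punctures yields
\begin{equation*}
\mathrm{ind}\bigl(L_g : W^{4,2}_\delta \to L^2_{\delta-4}\bigr) - \mathrm{ind}\bigl(L_g : W^{4,2}_{-\delta} \to L^2_{-\delta-4}\bigr) \;=\; \dim \mathcal{W}_g \;=\; 2k.
\end{equation*}

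Next, I would invoke the formal self-adjointness of $L_g$, which follows from the symmetry of the Paneitz--Branson operator and of the multiplication potential. Under the natural $L^2(d\mu)$ pairing between dual weighted Sobolev spaces, the cokernel of $L_g$ at weight $\delta$ is isomorphic to the kernel of $L_g$ at weight $-\delta$, and vice versa. Consequently $\mathrm{ind}(L_g,\delta) = -\mathrm{ind}(L_g,-\delta)$, which together with the relative-index identity yields $\mathrm{ind}(L_g,\delta) = k$. Since $W^{4,2}_{-\delta}\subset W^{4,2}_\delta$, the definition of the bounded null space unwinds to $\dim\mathcal{B}_g = \dim\ker(L_g,\delta) - \dim\ker(L_g,-\delta)$, and the self-adjointness identification of the cokernel identifies this difference with the Fredholm index, giving $\dim\mathcal{B}_g = \mathrm{ind}(L_g,\delta) = k$.

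The main technical obstacle will be justifying Melrose's relative-index formalism for the fourth-order operator $L_g$ whose model ends are periodic (rather than constant-coefficient): a Floquet analysis on each Fourier mode $\mathcal{L}_{\varepsilon_i,j}$ is needed to define the indicial roots rigorously and to verify that crossing $0$ contributes exactly $\dim\ker\mathcal{L}_{\varepsilon_i,0} = 2$ per puncture. Once this framework is in place, the equality $\dim\mathcal{W}_g = 2k$ is immediate from Definition \ref{deficiency_defn}, and the remaining steps are standard duality bookkeeping.
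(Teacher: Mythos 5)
Your proposal is correct and follows essentially the same route as the paper: Melrose's relative index theorem to show the index jumps by $2$ at each puncture when the weight crosses $0$, combined with formal self-adjointness of $L_g$ to get $\operatorname{ind}(\delta)=-\operatorname{ind}(-\delta)$ and to identify $\dim\mathcal{B}_g$ with $\operatorname{ind}(\delta)=k$. The only caution is in your last paragraph: the per-puncture contribution is not $\dim\ker\mathcal{L}_{\varepsilon_i,0}$ as an ODE operator (which is $4$-dimensional) but the number of \emph{tempered} Jacobi fields of subexponential growth on the Delaunay end, namely the bounded field $w_0^+(\varepsilon_i)$ and the linearly growing field $w_0^-(\varepsilon_i)$, which is exactly how the paper's contour-integral computation counts them.
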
 

The proof below is more or less the same as the proof of 
Theorem 4.24 in \cite{MPU}. 
\begin{proof} We compute the relative index of $L_g$ acting 
on the appropriate weighted function spaces. Recall that the 
index of 
$$L_g : W^{4,2}_\delta (\R^n \backslash \widetilde \Lambda) 
\rightarrow W^{0,2}_{\delta-4} (\R^n \backslash \widetilde \Lambda)$$
is 
$$\operatorname{ind} (\delta) = \dim (\ker(L_g)) - 
\dim (\operatorname{coker}(L_g)). $$
Integration by parts shows that the $L^2$-adjoint of $L_g$ acting 
on $W^{4,2}_\delta$ is $L_g$ acting on $W^{4,2}_{-\delta}$, and so 
it follows 
$$\operatorname{ind} (-\delta) = - \operatorname{ind}(\delta) $$
provided $\delta \not \in \Gamma_g$. (In this case, reversing the 
sign of the weight $\delta$ exchanges the kernel and the cokernel.) 
Next recall that, provided 
$\delta_1, \delta_2 \not \in \Gamma_g$, the relative index is 
defined as 
$$\operatorname{rel-ind}(\delta_1, \delta_2) = \operatorname{ind}(\delta_1) 
- \operatorname{ind}(\delta_2) .$$
We use duality once more ({\it i.e.} the operator $L_g$ is formally 
self-adjoint in $L^2$) to see so long as $0 < \delta < \min_{1\leq i \leq k} 
\gamma_1(\varepsilon_i)$ we have 
\begin{eqnarray} \label{rel_ind_bnd_null} 
\operatorname{rel-ind} (\delta, -\delta) & = & \operatorname{ind}(\delta) 
- \operatorname{ind} (-\delta) = 2 \operatorname{ind}(\delta) \\ \nonumber
& = & 2 \left ( \dim(\ker(\left. L_g \right |_{W^{4,2}_\delta} ))
- \dim (\ker (\left. L_g \right |_{W^{4,2}_{-\delta}} )) \right ) = 
2 \dim \mathcal{B}_g.
\end{eqnarray}
Thus it suffices to show that $\operatorname{rel-ind} (\delta, -\delta) 
= 2k$ for an appropriate choice of $\delta$. We choose $0 < \delta < 
\min_{1 \leq i \leq k} \gamma_1(\varepsilon_i)$. 

We compute this relative index theorem using the 
Melrose's relative index theorem (see Theorem 6.5 of \cite{Mel}). 
We first decompose 
$$\Ss^n \backslash \{ p_1, \dots, p_k\} = \Omega^c \cup \left ( 
\bigcup_{i=1}^k \mathbf{B}_r(p_i)\backslash \{ p_i\}\right ).$$
We can now write $L_g$ as the sum of restrictions 
$$L_g = \left. L_g \right |_{\Omega^c} + \sum_{i=1}^k  
\left. L_g \right |_{\mathbf{B}_r(p_i) \backslash \{ p_i\}}  $$
and compute the relative index of each restriction separately. It 
might appear that we first have to take the boundary data of 
the restrictions into account, but since 
$$\partial \Omega^c = \bigcup_{i=1}^k \partial \mathbf{B}_r(p_i),$$ 
except for the opposite orientations, the boundary contributions 
in the relative indices will cancel out. Thus it suffices to use the 
Dirichlet boundary data on the $k$ spheres $\{ \partial \mathbf{B}_r(p_i)\}$. 

The operator $L_g$ is elliptic and has index zero on $\Omega^c$, 
so now we're left with computing the relative index of the restriction 
$\left. L_g \right |_{\mathbf{B}_r(p_i) \backslash \{ p_i\} }$ for 
some $i \in \{ 1, \dots, k\}$. Next we observe that, because the relative index is a topological 
invariant, we can deform the metric of $g$ to be {\bf exactly} 
Delaunay in a small neighborhood of each puncture $p_j$. After transforming 
to cylindrical coordinates using the Emden-Fowler change of coordinates $\mathfrak{F}$, 
we finally arrive at the problem of computing the relative index of 
$$\mathcal{L}_{\varepsilon_i} : W^{4,2}_\delta ((0,\infty) \times\Ss^{n-1}) \rightarrow 
W^{0,2}_\delta ((0,\infty) \times \Ss^{n-1}).$$

This is where we use Melrose's machinery, as developed in Chapters 4,5 and 6 
of \cite{Mel}. To do so we introduce the Fourier-Laplace transform 
\begin{equation} \label{Fourier_Laplace_defn}
\widehat{v} (t,\zeta,\theta) = \mathcal{F}_{\varepsilon_i} (v)(t,\zeta,\theta) 
= \sum_{m=-\infty}^\infty e^{-im\zeta} v(t+mT_{\varepsilon_i},\theta)\end{equation}
and the twisted operator 
$$\widetilde{\mathcal{L}_{\varepsilon_i}} : W^{4,2}(\Ss^1_{T_{\varepsilon_i}} \times 
\Ss^{n-1}) \rightarrow W^{0,2}(\Ss^1_{T_{\varepsilon_i}} \times \Ss^{n-1})$$
defined by 
\begin{equation} \label{twisted_op_defn} 
\widetilde{\mathcal{L}_{\varepsilon_i}} (\zeta)(\widehat v) = e^{i\zeta} 
\mathcal{F}_{\varepsilon_i} \circ \mathcal{L}_{\varepsilon_i} \circ 
\mathcal{F}_{\varepsilon_i}^{-1} (e^{-i\zeta} \widehat v).\end{equation}

We make several observations before continuing. First observe that 
$\zeta \in \mathbf{C}$ is a parameter in the Fourier-Laplace transform, 
and the sum in \eqref{Fourier_Laplace_defn} converges precisely when $\zeta$ is in 
the half-space $\{ \zeta : \mathrm{Im} (\zeta) < -\delta T_{\varepsilon_i}\}$. Next 
observe that $\widetilde {\mathcal{L}_{\varepsilon_i}}$ is now a family 
of operators defined between the {\bf fixed} function spaces $W^{4,2}(\Ss^1_{T_{\varepsilon_i}}
\times \Ss^{n-1})$ and $W^{0,2}(\Ss^1_{T_{\varepsilon_i}} \times \Ss^{n-1})$
that depends holomorphically on the complex parameter $\zeta$. This 
allows us to use the analytic Fredholm theorem to conclude that 
$\widetilde{\mathcal{L}_{\varepsilon_i}}$ is Fredholm so long as 
$\zeta$ avoids a discrete set in the complex plan, which in turn 
allows us to define a right-inverse $\widetilde{\mathcal{G}_{\varepsilon_i}}(\zeta)$ 
for $\widetilde{\mathcal{L}_{\varepsilon_i}}$. This right inverse 
$\widetilde{\mathcal{G}_{\varepsilon_i}}$ has a meromorphic extenstion 
to $\mathbf{C}$ with poles at $\widetilde \Gamma_{\varepsilon_i}$. 
In fact, the indicial 
roots $\Gamma_{\varepsilon_i}$ are precisely the imaginary parts of the points 
in $\widetilde \Gamma_{\varepsilon_i}$. 

Melrose's relative index theorem states in this context that the 
relative index is given by a contour integral of the resolvent 
$(\widetilde{\mathcal{L}_{\varepsilon_i}} - \zeta)^{-1}$ about a 
contour surrounding the pole corresponding to the weight $0$, 
as described in the proof of Proposition 26 of \cite{jesse2020} and 
the proof of Proposition 4.15 of \cite{MPU}. This contour integral 
counts the number of tempered, non-decaying Jacobi fields 
with subexponential growth on a Delaunay end. However, we already 
know there are only two such Jacobi fields, namely $w_0^+(\varepsilon_i)$
and $w_0^-(\varepsilon_i)$. We conclude that 
\begin{eqnarray*} 
2 \dim (\mathcal{B}_g) & = & \sum_{i=1}^k \operatorname{ind} 
(\mathcal{L}_{\varepsilon_i}: W^{2,2}_\delta ((0,\infty) \times \Ss^{n-1}) \rightarrow
W^{-2,2}_\delta ((0,\infty) \times \Ss^{n-1}) \\ 
&&  - \sum_{i=1}^k \operatorname{ind} 
(\mathcal{L}_{\varepsilon_i}: W^{2,2}_{-\delta} ((0,\infty) \times \Ss^{n-1}) \rightarrow
W^{-2,2}_{-\delta} ((0,\infty) \times \Ss^{n-1}) \\ 
& = & \sum_{i=1}^k 2 = 2k, 
\end{eqnarray*} 
as we claimed. 
\end{proof} 

\section{Local structure in the nondegenerate case} \label{sec:nondegen}  

In this section we prove local regularity of the moduli space near nondegenerate 
points, as stated in Theorem \ref{thm_nondegen_case}. We first recall the 
statement of the theorem, namely that if $g \in \mathcal{M}_\Lambda$ is 
nondegenerate then there exists an open neighborhood $\mathcal{U}
\subset \mathcal{M}_\Lambda$ of $g$ that is a smooth $k$-dimensional 
manifold. 

\begin{proof} We begin by prescribing the singular set $\Lambda = 
\{ p_1, p_2, \dots, p_k\}$ and choosing a nondegenerate metric 
$g \in \mathcal{M}_\Lambda$. Using the Euclidean gauge, we write $g$ as 
$g = u^{\frac{4}{n-4}} \delta$, where 
$$u:\R^n \backslash \Lambda \rightarrow (0,\infty), \quad 
(-\Delta_0)^2 u = \frac{n(n-4)(n^2-4)}{16} u^{\frac{n+4}{n-4}}, \quad 
\liminf_{x \rightarrow \Lambda} u(x) = \infty.$$
Nondegeneracy of $g$ states that the linearized operator 
$$L_g = (-\Delta_0)^2 - \frac{n(n+4)(n^2-4)}{16} u^{\frac{8}{n-4}}$$
acting on $W^{4,2}(\R^n\backslash \{ p_1, \dots, p_k\})$ has no 
kernel. By the linear decomposition lemma, this is equivalent to the condition that 
\begin{equation} \label{nondeg_ker_cond}
\ker(L_g : W^{4,2}_{-\delta} \oplus \mathcal{W}_g \rightarrow 
W^{0,2}_{-\delta}) = \mathcal{B}_g,\end{equation}
whenever $\delta> 0$ is sufficiently small. 
The bounded null space $\mathcal{B}_g$ always lies in 
the kernel of \eqref{nondeg_ker_cond}, but in the degenerate case 
the kernel will also contain a finite-dimensional space of decaying 
Jacobi fields. 

Intuitively, we would like to describe the metrics in $\mathcal{M}_\Lambda$ 
near $g$ as 
$$\mathcal{U} = \left \{ g_v = (u+v)^{\frac{4}{n-4}} \delta : 
\mathcal{H}_\delta(u+v) = (-\Delta_0)^2(u+v) - \frac{n(n-4)(n^2-4)}{16} 
(u+v)^{\frac{n+4}{n-4}} = 0 \right \}, $$
where $v$ is small with respect to an appropriate norm. If we only allow 
$v$ to decay, the linearized operator does not have any kernel by our 
hypothesis, and so it would be an exercise in futility to construct a 
solution set this way. Furthermore, we should allow the nearby metrics to 
have slightly different asymptotic data, which we cannot encode 
with a decaying perturbing function $v$. On the other hand, if we 
allow perturbing functions $v$ with any order of growth (or even non-decay), 
it is difficult to analyze the zero-set of the operator $\mathcal{H}$, 
and in particular it is impossible to relate the kernel of the 
linearization to this zero-set. We remedy this problem by deforming 
the asymptotic data according to an element of the deficiency space 
$\mathcal{W}_g$, as described below. 

We denote the 
asymptotic necksize of $g$ at the puncture $p_i$ by $\varepsilon_i$. 
Choose $\delta$ such that 
$$0 < \delta < \min_{1 \leq i \leq k} \gamma_1(\varepsilon_i).$$ 
We can identify conformally-related, constant $Q$-curvature metrics in a 
neighborhood of $g$ with 
\begin{equation} \label{deformation_space1}
\mathcal{Z} = \{ (v,w) \in \mathcal{V}_1 \oplus \mathcal{V}_2 \subset 
W^{4,2}_{-\delta} \oplus \mathcal{W}_g : \mathcal{H}(v,w) = 0 \} , 
\end{equation} 
where $\mathcal{V}_1$ and $\mathcal{V}_2$ are small neighborhoods of the origin.  To make sense of this, we should describe the mapping 
\begin{equation} \label{deformation_space2} 
\mathcal{H}: W^{4,2}_{-\delta} \oplus \mathcal{W}_g \rightarrow W^{0,2}_{-\delta}
\end{equation} 
in some detail. By the expansion \eqref{asymp1} (or, equivalently \eqref{asymp2})  
there exist parameters $\varepsilon_i \in (0, \overline{\varepsilon})$, 
$T_i \in \R$ and a decaying function $z \in W^{4,2}_{-\delta} (\B_r(0))$ such that  
$$u(x-p_i) = \mathfrak{F}^{-1} \left ( v_{\varepsilon_i} (-\log |\cdot -p_i| + T_i) 
\right ) + z(x-p_i) . $$

Now let $v \in W^{4,2}_{-\delta}$ and let $w \in \mathcal{W}_g$. By 
definition, 
$$w = \sum_{i=1}^k (a_i^+ w_0^+(\varepsilon_i) + a_i^- w_0^-(\varepsilon_i)),$$
where $a_i^\pm \in \R$. We define the metric $\widetilde g = 
\widetilde u^{\frac{4}{n-4}} \delta$, 
where 
\begin{equation} \label{deformation_space3} 
\widetilde u(x) = \left \{ \begin{array}{cc} u(x) + v(x) & |x-p_i|> 2r_0 \\ 
v(x) + (1-\chi(x)) u(x) + & \\ 
\chi(x)(\mathfrak{F}^{-1}(v_{\varepsilon_i+a_i^-}
(-\log|\cdot - p_i| + T_i + a_i^+)) + z(x-p_i)) & r_0 < |x-p_i| < 2r_0 \\ 
\mathfrak{F}^{-1} (v_{\varepsilon_i + a_i^-} (-\log |\cdot - p_i| + T_i +a_i^+) +z(x-p_i) +v(x-p_i) & 
0 < |x-p_i|< r_0 ,\end{array} \right. \end{equation}
where $r_0$ and $\chi$ are as in Definition \ref{deficiency_defn}. Observe that 
the coefficients $\{ a_i^+, a_i^-\}$ uniquely determine function $w\in \mathcal{W}_g$, 
so the dependence of $\widetilde{u}$ on $w$ is given in how we 
deform the geometric asymptotic data of $g = u^{\frac{4}{n-4}} \delta$. The construction 
of $\widetilde {g}$ is well-defined so long as $\varepsilon_i < \overline{\varepsilon}$, 
but we must adjust it slightly if $\varepsilon_i = \overline{\varepsilon}$. In 
this case we replace 
$$v_{\varepsilon_i + a_i^-} (-\log|\cdot - p_i| + T_i + a_i^+)$$
with 
$$\widetilde{v} (t,\theta) = v_{\overline{\varepsilon}} + 
a_i^- \cos(\sqrt{\mu} (t+T_i + a_i^+) + \mathcal{O}(e^{-t}) ,$$
as constructed in Proposition 1 of \cite{BR}. 

Finally, we identify 
\begin{eqnarray} \label{deformation_op1} 
\mathcal{H}(v,w) & = & \mathcal{H}_g(\widetilde u) = 
\widetilde{u}^{\frac{n+4}{n-4}} \left ( P_{\widetilde g} (1) - 
\frac{n(n-4)(n^2-4)}{16} \right ) \\ \nonumber 
& = & \frac{(n-4)}{2} \widetilde {u}^{\frac{n+4}{n-4}} \left ( 
Q_{\widetilde g} - \frac{n(n^2-4)}{8} \right ). \end{eqnarray}
We also observe that, by construction, $\mathcal{H} (0,0) = 0$.

With this definition, we see that the zero-set $\mathcal{Z}$ is exactly the set of 
constant $Q$-curvature metrics whose asymptotic data are 
close to that of $g$. Observe that we should not expect $\mathcal{H}(0,w)= 0$ 
for any nonzero element of the deficiency space $\mathcal{W}_g$. This is because 
we construct elements of $\mathcal{W}_g$ using a cut-off function $\chi$ 
to transfer deformations of the Delaunay asymptotes to the 
background metric $g$, and so the $Q$-curvature is non-constant 
in the transition region, where $\nabla \chi \neq 0$. However, the 
quantity $\displaystyle Q_{\widetilde g} - \frac{n(n^2-4)}{8}$ is small 
(assuming $w$ is small) and compactly supported. Thus we expect to 
be able correct the $Q$-curvature with a decaying function $v \in W^{4,2}_{-\delta}$, 
exactly as described above. Additionally, the linearization of the 
operator $\mathcal{H}_g$ as applied to $W^{4,2}_{-\delta} \oplus 
\mathcal{W}_g$ is 
$$L_g : W^{4,2}_{-\delta} \oplus \mathcal{W}_g \rightarrow 
W^{0,2}_{-\delta-4} $$
and
\begin{equation} \label{ker_linearization}
\ker \left (L_g: W^{4,2}_{-\delta} \oplus \mathcal{W}_g \rightarrow 
W^{0,2}_{-\delta-4}\right ) = \mathcal{B}_g \oplus \ker \left (L_g: W^{4,2}_{-\delta} \rightarrow 
W^{0,2}_{-\delta-4}\right ).\end{equation} 
However, since $g$ is nondegenerate the second summand on the right 
hand side of \eqref{ker_linearization} is just to $0$ function. 
Thus the kernel of $L_g$ is precisely $\mathcal{B}_g$, 
which has dimension $k$, which is also its minimal possible dimension. 
Thus $\dim(\ker(L_{\widetilde g})) = k$ on an open neighborhood of 
$g$ in $\mathcal{M}_\Lambda$, and so by the implicit function theorem 
an open neighborhood of $\mathcal{Z}$ containing $g$ is a smooth, 
$k$-dimensional manifold. 
\end{proof} 

\section{Local structure in the degenerate case} \label{sec:degen} 

Our purpose is to discuss the local structure of the moduli space $\mathcal{M}_{\Lambda}$ 
without the hypothesis that the linearization has a trivial $L^{2}$-nullspace. In this 
context, we apply the Lyapunov-Schmidt argument as presented in \cite{KMP}. The key idea 
goes back to Simon's proof of an infinite-dimensional version of the \L ojasiewicz
inequality, see Theorem 3 of \cite{Si}.

\begin{theorem} The space $\mathcal{M}_{\Lambda}$ is locally a finite dimensional real analytic variety.
\end{theorem}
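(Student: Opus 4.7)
The plan is to follow the Lyapunov--Schmidt reduction of Kusner, Mazzeo, Pollack, and Uhlenbeck as laid out in \cite{KMP}, applied to the real-analytic map
$$\mathcal{H}: \mathcal{V}_1 \oplus \mathcal{V}_2 \subset W^{4,2}_{-\delta} \oplus \mathcal{W}_g \rightarrow W^{0,2}_{-\delta-4}$$
constructed in the proof of Theorem \ref{thm_nondegen_case}. As in that proof, a neighborhood of $g$ in $\mathcal{M}_\Lambda$ is locally identified with the zero set $\mathcal{Z} = \mathcal{H}^{-1}(0)$, so the goal is to exhibit $\mathcal{Z}$ as the zero set of a finite system of real-analytic equations.

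The linearization of $\mathcal{H}$ at the origin is the operator $L_g : W^{4,2}_{-\delta} \oplus \mathcal{W}_g \rightarrow W^{0,2}_{-\delta-4}$, which is Fredholm with kernel $\mathcal{K} = \mathcal{B}_g \oplus K_{-\delta}$, where $K_{-\delta} = \ker(L_g : W^{4,2}_{-\delta} \rightarrow W^{0,2}_{-\delta-4})$ consists of decaying Jacobi fields, and with cokernel naturally isomorphic to $K_{-\delta}$ via the formal $L^2$ self-adjointness of $L_g$. Setting $m = \dim K_{-\delta}$, the Fredholm index is $k$ by the relative-index computation of the previous section. Let $\Pi$ denote the $L^2$-orthogonal projection onto a fixed copy of the cokernel, and split $\mathcal{H}(v,w)=0$ into $(I-\Pi)\mathcal{H}(v,w)=0$ and $\Pi\mathcal{H}(v,w)=0$. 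The first equation, restricted to a closed complement of $\mathcal{K}$, has an isomorphism as its linearization at the origin, so the analytic implicit function theorem on Banach spaces furnishes a real-analytic map $F$ from a small neighborhood of $0$ in $\mathcal{K}$ into $\mathcal{K}^\perp$ with $F(0)=0$ that solves it identically. Substituting $F$ into the second equation reduces the problem to the zero set of the real-analytic map
$$\Phi(\xi) := \Pi\,\mathcal{H}(\xi + F(\xi)): \mathcal{K} \rightarrow K_{-\delta},$$
which is $m$ equations in $k+m$ unknowns. This zero set is, tautologically, a real-analytic variety of formal dimension $k$; when $m=0$ we recover Theorem \ref{thm_nondegen_case}.

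The main technical hurdle is to confirm that every step is genuinely real-analytic, not merely $C^\infty$. The power nonlinearity $u \mapsto u^{(n+4)/(n-4)}$ is real-analytic as a Nemytskii operator on the relevant Banach spaces because the background function constructed in \eqref{deformation_space3} is uniformly bounded away from zero on compact sets and has the correct quantitative asymptotic profile near each puncture. The dependence of the deformed asymptote $v_{\varepsilon_i + a_i^-}(\,\cdot\, + T_i + a_i^+)$ on the parameters $a_i^\pm$ is real-analytic because the Delaunay family $v_\varepsilon$ depends real-analytically on the necksize $\varepsilon$ via the ODE \eqref{del_ode} (with the borderline case $\varepsilon_i = \overline{\varepsilon}$ handled by the expansion from \cite{BR} already used in the construction). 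Once these ingredients are in place, the analytic implicit function theorem applies and the reduced map $\Phi$ is real-analytic; the conclusion follows because the zero set of a real-analytic map between finite-dimensional spaces is, by definition, a real-analytic variety.
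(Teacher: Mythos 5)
Your proposal is correct and follows essentially the same route as the paper: a Lyapunov--Schmidt reduction of the map $\mathcal{H}$ on $W^{4,2}_{-\delta} \oplus \mathcal{W}_g$, using the Fredholm property of $L_g$, the identification of the cokernel with the decaying kernel $K$ via formal $L^2$ self-adjointness, and the analytic implicit function theorem to exhibit the zero set as that of a real-analytic map on the finite-dimensional kernel. The only difference is bookkeeping: the paper augments the map with an auxiliary variable $\phi \in K$ (setting $\widetilde{\mathcal{H}}(v,w,\phi) = \mathcal{H}(v,w) + \phi$) and parametrizes $\ker(\Pi^\perp \circ \widetilde{\mathcal{H}})$, whereas you split the equation directly with the projection onto the cokernel; the two formulations are equivalent, and yours makes the count of $m$ equations in $k+m$ unknowns (hence formal dimension $k$) slightly more explicit, while also spelling out the analyticity of the Nemytskii and parameter dependence that the paper leaves implicit.
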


\begin{proof}
Once again, our problem can be reduced to the understanding of the zero set of 
$$\mathcal{H} : W^{4,2}_{-\delta} \oplus \mathcal{W}_g \rightarrow 
W^{0,2}_{-\delta}, $$
where $\mathcal{H}$ is defined in  \eqref{deformation_space3} and \eqref{deformation_op1}. 
This time, however, the kernel of the linearization that we shall denote as $K \equiv\ker \left (L_g: W^{4,2}_{-\delta} \rightarrow 
W^{0,2}_{-\delta-4}\right)$ is nontrivial in $ \subset W^{4,2}_{-\delta}$, and it can identify by duality with the cokernel of $L_g : W^{4,2}_\delta \rightarrow W^{0,2}_{\delta-4}.$

Following \cite{KMP} we define 
$$\widetilde{\mathcal{H}}: W^{4,2}_{-\delta} \oplus \mathcal{W}_g\oplus K 
\rightarrow W^{0,2}_{-\delta}, \qquad \widetilde{\mathcal{H}}(v,w,\phi) 
= \mathcal{H}(v,w) + \phi ,$$
so that 
\begin{align*}
\mathcal{Z} & = \{ (v,w) \in \mathcal{V}_1 \oplus \mathcal{V}_2 : 
\mathcal{H}(v,w) = 0 \} \\ & = \{ (v,w,\phi) \in \mathcal{V}_1 \oplus \mathcal{V}_2
\oplus \mathcal{V}_3 : \widetilde{\mathcal{H}} (v,w,\phi) = \phi \}.  
\end{align*}
where $\mathcal{V}_1\subset W^{4,2}_{-\delta}$, $\mathcal{V}_2 \subset 
\mathcal{W}_g$ and $\mathcal{V}_3\subset K$ are small neighborhoods of the origin in each respective Banach space. 

We now see that 
\begin{eqnarray} 
\mathcal{Z} & \subset & \widetilde{\mathcal{Z}} = \{ (v,w,\phi) \in  \mathcal{V}_1 \oplus \mathcal{V}_2
\oplus \mathcal{V}_3 : 
\widetilde{\mathcal{H}}(v,w,\phi) \in K_{(1)} \} \\ \nonumber 
& = & \{ (v,w,\phi) \in  \mathcal{V}_1 \oplus \mathcal{V}_2
\oplus \mathcal{V}_3: \Pi^\perp (\widetilde{\mathcal{H}}(v,w,\phi)) = 0 \} 
= \ker (\Pi^\perp \circ \widetilde{\mathcal{H}}), 
\end{eqnarray} 
where $\Pi^\perp$ is the orthogonal projection of $W^{0,2}_{-\delta}$
onto $K^\perp$. The linearization of this operator is given by
$$\Pi^\perp \circ L_g: K^\perp \oplus \mathcal{W}_g \oplus K \rightarrow 
W^{0,2}_{-\delta},$$
which is now a surjective operator. Furthermore, we can characterize the 
kernel of the linearization as 
$$\ker (\Pi^\perp \circ L_g) = \{ (v,w,\phi) \in W^{4,2}_{-\delta} \oplus \mathcal{W}_g 
\oplus K : L_g (v+w) \in K\} \simeq K \oplus \mathcal{B}_g .$$
Thus by the implicit function theorem, there is a 
real-analytic function 
$$\Psi : K \oplus \mathcal{B}_{g} \oplus K \rightarrow (W^{4,2}_{-\delta} 
\oplus \mathcal{W}_g) /(K \oplus \mathcal{B}_g), \qquad \Psi(v,w,\phi) = 
(\psi_1(v,w,\phi), \psi_2(v,w,\phi)) $$
such that 
$$\widetilde{\mathcal{Z}} = \{ (\psi_1(v,w,\phi),\psi_2(v,w,\phi),\phi): 
(v,w,\phi) \in K \oplus \mathcal{B}_g \oplus K \} .$$

Unraveling these definitions we see 
$$\mathcal{Z} \simeq \{ (v,w,\phi) \in K \oplus \mathcal{B}_g \oplus K : 
\mathcal{H} (v+\psi_1(v,w,\phi), w+\psi_2(v,w,\phi)) = 0 \},$$
which is indeed the zero set of an analytic function acting on a 
finite-dimensional vector space. This proves a small neighborhood 
$\mathcal{U} \subset \mathcal{M}_\Lambda$ containing $g$ is indeed 
a real-analytic variety. 
\end{proof}

\section{Symplectic structure} \label{sec:symp}

Here we discuss the asymptotes mapping from  
the marked moduli space $\mathcal{M}_\Lambda$ into a fixed 
configuration space $\mathbb{M}_\Lambda = (0,\overline{\varepsilon}]^k 
\times \R^k$, where each pair $(\varepsilon_i, T_i)$ characterizes
the Delaunay asymptote at the puncture $p_i$. We further 
show that if $g \in \mathcal{M}_\Lambda$ is nondegenerate then 
this local mapping is a Lagrangian embedding with respect to 
the standard symplectic structure. One can 
construct a similar asymptotes map for the unmarked moduli 
space, and much of the properties we prove below carry through, 
but in this latter case, the configuration spaces are 
larger and constructing the mapping is more involved. 

First we construct a symplectic form on $\mathcal{M}_\Lambda$. Let 
$g = U^{\frac{4}{n-4}} \overset{\circ}{g} \in \mathcal{M}_\Lambda$ 
and transfer $g$ to $\R^n \backslash \widetilde \Lambda$ using stereographic projection, rewriting $g = u^{\frac{4}{n-4}} \delta$ with 
$u = U u_{\rm sph}$. For any sufficiently small $r>0$ we define
$$\Omega_r = \R^n \backslash \left ( \bigcup_{i=1}^k \B_r(p_i) \right )$$
and 
\begin{equation} \label{defn_symp_form}
\omega(v,w) = \lim_{r \searrow 0} \int_{\Omega_r} (v L_g (w) - 
w L_g (v) )d\mu_0.\end{equation} 
Here $d\mu_0$ is the Euclidean volume element, $v,w \in \mathcal{W}_g$ 
lie in the deficiency space of $g$ (See Definition \ref{deficiency_defn}) and $L_g$ is the Jacobi operator of $g$, 
which is defined in \eqref{lin_op_eucl}.

\begin{theorem} \label{thm:symp_form} 
The form $\omega$ defined in \eqref{defn_symp_form} is a 
symplectic form on the $2k$-dimensional vector space $\mathcal{W}_g$. 
\end{theorem}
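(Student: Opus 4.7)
The plan is to verify the three defining properties of a symplectic form on $\mathcal{W}_g$: existence of the limit in \eqref{defn_symp_form}, skew-symmetry, and non-degeneracy. Bilinearity and skew-symmetry follow immediately from the integrand $v L_g(w) - w L_g(v)$.

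For the existence of the limit, I will apply Green's identity to the self-adjoint fourth-order operator $L_g = (-\Delta_0)^2 - V_g$, where $V_g = \frac{n(n+4)(n^2-4)}{16}\, u^{8/(n-4)}$ is zeroth-order multiplication. Integration by parts yields
\[
\int_{\Omega_r}(v L_g(w) - w L_g(v))\, d\mu_0 = \sum_{i=1}^k \int_{\partial \mathbf{B}_r(p_i)} B[v,w]\, dS,
\]
where $B[v,w]$ is the standard Green boundary form for the bi-Laplacian (the potential $V_g$ is symmetric and cancels in the antisymmetric combination). The outer boundaries at $|x-p_i|=2r_0$ contribute nothing because each element of $\mathcal{W}_g$ is compactly supported inside $\bigcup_i \mathbf{B}_{3r_0/2}(p_i)$ by the choice of $\chi$. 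Since these supports are pairwise disjoint for distinct punctures, $\omega$ decomposes as a direct sum $\omega = \bigoplus_{i=1}^k \omega_i$ on $\mathcal{W}_g = \bigoplus_{i=1}^k \mathcal{W}_{\varepsilon_i}$, reducing non-degeneracy to showing that each $\omega_i$ is a nondegenerate form on the two-dimensional space $\mathcal{W}_{\varepsilon_i}$, hence to showing $\omega_i(w_0^+(\varepsilon_i), w_0^-(\varepsilon_i)) \neq 0$.

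After the Emden-Fowler transformation near $p_i$, the basis elements of $\mathcal{W}_{\varepsilon_i}$ become exactly $w_0^\pm(\varepsilon_i)$ in the region $\{\chi = 1\}$. The difference $L_g - L_{\varepsilon_i}$ is again symmetric multiplication and therefore cancels in $v L_g(w) - w L_g(v)$, so the inner boundary integral reduces to the boundary form for the exact Delaunay linearization $\mathcal{L}_{\varepsilon_i}$. Since both $w_0^\pm$ lie in the $j=0$ Fourier mode, this in turn reduces to the ODE Wronskian for $\mathcal{L}_{\varepsilon_i,0}$, which is conserved in $t$ by self-adjointness. Thus the limit as $r \to 0$ exists and satisfies $\omega_i(w_0^+(\varepsilon_i), w_0^-(\varepsilon_i)) = c\, W[w_0^+(\varepsilon_i), w_0^-(\varepsilon_i)]$ for some nonzero geometric constant $c$.

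The hard part is to show this Wronskian is nonzero. My plan is to identify it with the $\varepsilon$-derivative of the conserved Hamiltonian $\mathcal{H}_\varepsilon$ from \eqref{eq002}. Differentiating $\mathcal{H}_\varepsilon$ with respect to $\varepsilon$ at fixed $t$, using $\partial_\varepsilon v_\varepsilon = w_0^-$ and $\partial_t v_\varepsilon = w_0^+$, and employing the ODE \eqref{del_ode} to eliminate $\ddddot v_\varepsilon$, one should obtain
\[
W[w_0^+(\varepsilon), w_0^-(\varepsilon)] = -\frac{d\mathcal{H}_\varepsilon}{d\varepsilon}.
\]
Since the paper records that $\mathcal{H}_\varepsilon$ is a strictly decreasing function of $\varepsilon$ on $(0,\overline{\varepsilon}]$, this derivative is strictly negative, and each $\omega_i$ is nondegenerate. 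The cylindrical case $\varepsilon_i = \overline{\varepsilon}$, where $w_0^\pm$ are the trigonometric fields of \eqref{cyl_jac_fields}, needs a separate direct computation of the Wronskian. The main obstacle I anticipate is precisely this algebraic identification $W = -d\mathcal{H}_\varepsilon/d\varepsilon$, which requires careful bookkeeping of the fourth-order Green boundary form for $\mathcal{L}_{\varepsilon,0}$, matching its terms against the explicit expression for $\mathcal{H}_\varepsilon$, and using the ODE to replace the top-order derivative that appears through $\partial_\varepsilon \dddot v_\varepsilon$.
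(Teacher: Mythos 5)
Your proposal is correct and follows essentially the same route as the paper's proof: integration by parts to reduce $\omega$ to an inner boundary integral, Emden--Fowler transformation so that only the radial ($j=0$) part survives in the limit, conservation in $t$ of the resulting Wronskian-type quantity, and identification of its value with $-\frac{d}{d\varepsilon}\mathcal{H}_\varepsilon$, which is strictly positive by the monotonicity of the Hamiltonian energy. Your explicit block-decomposition $\omega=\bigoplus_i\omega_i$ is the same cancellation the paper exploits implicitly by summing boundary contributions over the punctures, and you are right to flag the cylindrical endpoint $\varepsilon=\overline{\varepsilon}$ as requiring a separate (though routine) check, a point the paper glosses over.
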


\begin{proof} Our first order of business is to show that 
$\omega$ is well-defined, {\it i.e.} that the limit in \eqref{defn_symp_form}
exists. By \eqref{lin_op_eucl} observe that 
$$vL_g(w) - wL_g(v) = v \Delta_0^2 w - w \Delta_0^2 v.$$
Next, we recall that the outer unit normal of $\Omega_r$ is 
$-\partial_r$ on each boundary sphere $\partial \B_r(p_i)$ 
and integrate by parts to see 
$$\int_{\Omega_r} v\Delta_0^2 w d\mu_0 = \int_{\Omega_r} 
(\Delta_0 v) (\Delta_0 w) d\mu_0 - \sum_{i=1}^k \int_{\partial 
\B_r(p_i)} (v \partial_r \Delta_0 w - \partial_r v \Delta_0 w
)d\sigma_0,$$
and so \eqref{defn_symp_form} becomes 
 \begin{equation} \label{symp_form2} 
 \omega(v,w) = \lim_{r \searrow 0} \sum_{i=1}^k 
 \int_{\partial \B_r(p_i)} (w \partial_r \Delta_0 v - v \partial_r \Delta_0 w
 + \partial_r v \Delta_0 w - \partial_r w \Delta_0 v )d\sigma_0.\end{equation} 

 Next, we change variables using \eqref{emden_fowler_coords}, letting 
 $\widetilde v = \mathfrak{F}(v)$ and $\widetilde w= \mathfrak{F}(w)$. Under this 
 change of variables 
 \begin{equation} \label{transformed_derivatives1} 
 \partial_r v = -e^{ \frac{n-2}{2}t} 
 \left ( \partial_t \widetilde v + \frac{n-4}{2} \widetilde v \right ), 
 \qquad \Delta_0 v = e^{\frac{n}{2}t} \left ( \partial_t^2 \widetilde v - 2 \partial_t 
 \widetilde v -\frac{n(n-4)}{4} \widetilde v + \Delta_\theta \widetilde v \right )
 \end{equation} 
 and 
 \begin{equation} \label{transformed_derivatives2} 
 \partial_r \Delta_0 v = -e^{\frac{n+2}{2}t} \left ( \partial_t^3 
 \widetilde v + \frac{n-4}{2} \partial_t^2 \widetilde v -\frac{n^2}{4} \partial_t \widetilde v
 -\frac{n^2(n-4)}{8} \widetilde v + \Delta_\theta \partial_t \widetilde v + \frac{n}{2} \Delta_\theta 
 \widetilde v \right ).\end{equation}

 Plugging \eqref{transformed_derivatives1} and \eqref{transformed_derivatives2}
 into \eqref{symp_form2} we obtain 
 \begin{align*}
   \int_{\partial \B_r(p_i)} & \left(w \partial_r \Delta_0 v - v \partial_r \Delta_0 w + 
 \partial_r v \Delta_0 w - \partial_r w \Delta_0 v\right) d\sigma_0\\  
 = & \int_{\Ss^{n-1}} (w \partial_r \Delta_0 v - v \Delta_0 w + \partial_r v \Delta_0 w - \partial_r w  \Delta_0 v )(r\theta)r^{n-1} d\theta\\
 = &\int_{\Ss^{n-1}} \left ( -\widetilde{w}\partial_t^3 \widetilde{v} - \frac{n-4}{2}  \widetilde{w}
 \partial_t^2 \widetilde{v} + \frac{n^2}{4} \widetilde{w}\partial_t \widetilde{v} + \frac{n^2(n-4)}{8} \widetilde{w}\widetilde{v} 
 - \widetilde{w}\Delta_\theta \partial_t \widetilde{v} - \frac{n}{2} \widetilde{w}\Delta_\theta \widetilde{v}  \right.\\
 & +\widetilde{v}\partial_t^3 \widetilde{w} + \frac{n-4}{2}  \widetilde{v}
 \partial_t^2 \widetilde{w} - \frac{n^2}{4} \widetilde{v}\partial_t \widetilde{w} - \frac{n^2(n-4)}{8} \widetilde{v}\widetilde{w} 
 + \widetilde{v}\Delta_\theta \partial_t \widetilde{w} + \frac{n}{2} \widetilde{v}\Delta_\theta \widetilde{w} \\ 
 &-\partial_t \widetilde{v} \partial_t^2 \widetilde{w} + 2 \partial_t \widetilde{v} 
 \partial_t \widetilde{w} + \frac{n(n-4)}{4} \widetilde{w} \partial_t \widetilde{v} - \partial_t \widetilde{v} 
 \Delta_\theta \widetilde{w}  - \frac{n-4}{2} \widetilde{v} \partial_t^2\widetilde{w} + (n-4)\widetilde{v} 
 \partial_t\widetilde{w}\\
 &+ \frac{n(n-4)^2}{8} \widetilde{v}\widetilde{w} - \frac{n-4}{2} \widetilde{v} \Delta_\theta
 \widetilde{w}\\ 
 &+\partial_t \widetilde{w} \partial_t^2 \widetilde{v} - 2 \partial_t \widetilde{w} 
 \partial_t \widetilde{v} - \frac{n(n-4)}{4} \widetilde{v} \partial_t \widetilde{w} + \partial_t \widetilde{w} 
 \Delta_\theta \widetilde{v}  + \frac{n-4}{2} \widetilde{w} \partial_t^2\widetilde{v} - (n-4)\widetilde{w} 
 \partial_t\widetilde{v}\\
 &\left.- \frac{n(n-4)^2}{8} \widetilde{w}\widetilde{v} + \frac{n-4}{2} \widetilde{w} \Delta_\theta
 \widetilde{v} \right)d\theta\\
 = & \int_{\Ss^{n-1}} \left ( \widetilde{v} \partial_t^3 \widetilde{w} - \widetilde{w} \partial_t^3 \widetilde{v} 
 + \partial_t\widetilde{w} \partial_t^2 \widetilde{v} - \partial_t{v} \partial_t^2\widetilde{w} 
 +\frac{n(n-4)+8}{2} (\widetilde{w}\partial_t \widetilde{v} - \widetilde{v} \partial_t\widetilde{w} 
 )\right ) d\theta
 \end{align*}

 Observe that each element of the deficiency space is asymptotically radial about 
 each puncture point, so that the expansions 
 $$\mathfrak{F} (v(\cdot - p_i)) (t,\theta) = \overline{v}(t) + \mathcal{O} (e^{-\delta t}), 
 \qquad \mathfrak{F}(w(\cdot - p_i)) (t,\theta) = \overline{w}(t) + \mathcal{O}(e^{-\delta t})$$
 for each puncture $p_i$. Thus each term in the expansion above involving derivatives with 
 respect to $\theta$ will vanish in the limit. 

 Next we use the fact that both $v$ and $w$ lie in the definciency space 
 $\mathcal{W}_g$ (see Definition \ref{deficiency_defn}). This means that near each puncture $p_i$ the 
 functions $v$ and $w$ have asymptotic expansions of the form 
 \begin{equation} \label{expansion_def_space}
 \widetilde v = \alpha_i^+ w_0^+(\varepsilon_i) + \alpha_i^- w_0^-(\varepsilon_i), \qquad 
 \widetilde w = \beta_i^+ w_0^+(\varepsilon_i) + \beta_i^- w_0^-(\varepsilon_i)  
 \end{equation} 
 for $t$ sufficiently large.

 At each end, using bilinearity and skew-symmetry, we see that $\omega(v,w)$ is the limit as $t\rightarrow\infty$ of
\begin{eqnarray}\label{eq001}
    (\alpha_i^+\beta_i^--\alpha_i^-\beta_i^+)\int_{\Ss^{n-1}}\left(w_0^+ \dddot {w}_0^- 
     -  w_0^- \dddot {w}_0^+ + \dot {w}_0^- \ddot {w}_0^+ - \dot {w}_0^+ \ddot {w}_0^-
     \right .&& \\ \nonumber 
     \qquad \qquad \qquad \qquad \qquad  \left .  + \frac{n(n-4)+8}{2} (w_0^- \dot {w}_0^+ - w_0^+ \dot {w}_0^-)\right)d\theta & & 
\end{eqnarray}

If $A_\varepsilon$ is the integrand in \eqref{eq001}, then
\begin{align*}
\frac{d}{dt}A_\varepsilon
& = w_0^+ \left(\ddddot w_0^--\frac{n(n-4)+8}{2}\ddot w_0^-\right) - w_0^- \left(\ddddot w_0^+-\frac{n(n-4)+8}{2}\ddot w_0^+\right)\\
& =\left ( 
\frac{n^2(n-4)^2}{16} - \frac{n(n+4)(n^2-4)}{16} v_\varepsilon^{\frac{8}{n-4}} \right )\left(w_0^+w_0^--w_0^-w_0^+\right)=0.
\end{align*}

Thus $A_\varepsilon$ does not depend on $t$. Here we have used the ODE for $w_0^\pm$, namely 
$$\ddddot w_0^\pm - \frac{n(n-4)+8}{2} \ddot w_0^\pm + \left ( 
\frac{n^2(n-4)^2}{16} - \frac{n(n+4)(n^2-4)}{16} v_\varepsilon^{\frac{8}{n-4}} \right ) w_0^\pm 
= 0.$$

Let us find the value of the integrand in \eqref{eq001} at $t=0$ using the definitions of $w_0^\pm$ in \eqref{geom_jac_fields}. First observe that $w_0^+(\varepsilon) = \dot v_\varepsilon$, 
so $w_0^+ (0)  =  \dot v_\varepsilon (0) = 0$, $\dot w_0^+ (0)  =  \ddot v_\varepsilon (0) > 0$, $\ddot w_0^+(0)  =  \dddot v_\varepsilon (0) = 0$ and by \eqref{del_ode} we get
$$\dddot w_0^+ (0) = \ddddot v_\varepsilon (0)  =  \frac{n(n-4)+8}{2} \ddot v_\varepsilon(0) - \frac{n(n-4)}{16} 
\left ( n(n-4) \varepsilon - (n^2-4)\varepsilon^{\frac{n+4}{n-4}} \right ) .
$$
Furthermore, since $\displaystyle w_0^- = \frac{d}{d\varepsilon} v_\varepsilon$
and $v_\varepsilon$ assumes its minimal value at $t=0$ we see 
$w_0^-(0) =1$. Therefore, at $t=0$ it holds
\begin{align*}
    A_\varepsilon(0) & =-\dddot w_0^+(0)-\ddot v_\varepsilon(0)\ddot w_0^-+\frac{n(n-4)+8}{2}\ddot v_0^+(0)\\
    & =-\ddot v_\varepsilon(0)\ddot w_0^-+\frac{n(n-4)}{16} 
\left ( n(n-4) \varepsilon - (n^2-4)\varepsilon^{\frac{n+4}{n-4}} \right )
\end{align*}
By \eqref{eq002} we have
\begin{equation} \label{eq003}
\mathcal{H}_\varepsilon = \frac{1}{2} \ddot v_\varepsilon (0)^2 - \frac{n^2(n-4)^2}{32} 
\varepsilon^2 + \frac{(n-4)^2(n^2-4)}{32} \varepsilon^{\frac{2n}{n-4}} . 
\end{equation} 
As we remarked earlier, the energy is a decreasing function of $\varepsilon$, minimized by the 
cylinder, which has the largest possible necksize, so differentiating \eqref{eq003} we see 
\begin{align}
    0  >  \frac{d}{d\varepsilon} \mathcal{H}_\varepsilon & = \ddot v_\varepsilon (0) 
\frac{d}{d\varepsilon} \ddot v_\varepsilon (0) - \frac{n^2(n-4)^2}{16} \varepsilon 
+ \frac{n(n-4)(n^2-4)}{16} \varepsilon^{\frac{n+4}{n-4}} \\ \nonumber 
 & =  \ddot v_\varepsilon (0) \ddot w_0^- (0) - \frac{n(n-4)}{16}(n(n-4) \varepsilon 
- (n^2-4)\varepsilon^{\frac{n+4}{n-4}} ) .
\end{align}
This implies that
$$A_\varepsilon(0)=-\frac{d}{d\varepsilon}\mathcal H_\varepsilon>0,$$
and so $\omega$ is nondegenerate.
 \end{proof}

 \begin{corollary} 
 Let $g \in \mathcal{M}_\Lambda$ be nondegenerate. Then there exists an 
 open neighborhood $\mathcal{U}$ of $g$ in $\mathcal{M}_\Lambda$ that 
 embeds into $\mathcal{W}_g$ as a Lagrangian submanifold, with respect to 
 the symplectic form given by \eqref{defn_symp_form}. 
 \end{corollary}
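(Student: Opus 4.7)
The plan is to exploit the smooth structure from Theorem \ref{thm_nondegen_case} together with a Green's-identity argument that parallels the calculation in the proof of Theorem \ref{thm:symp_form}. Theorem \ref{thm_nondegen_case} supplies a smooth $k$-dimensional neighborhood $\mathcal{U} \subset \mathcal{M}_\Lambda$ of $g$; since $\dim \mathcal{W}_g = 2k$ and $\dim \mathcal{U} = k$ the dimension count is already correct for a Lagrangian submanifold. The required embedding $\Phi: \mathcal{U} \to \mathcal{W}_g$ is implicit in the proof of Theorem \ref{thm_nondegen_case}: each $\widetilde{g} \in \mathcal{U}$ corresponds to a unique pair $(v,w) \in \mathcal{V}_1 \oplus \mathcal{V}_2 \subset W^{4,2}_{-\delta} \oplus \mathcal{W}_g$ with $\mathcal{H}(v,w)=0$, and I set $\Phi(\widetilde{g}) = w$. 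Its derivative at $g$ is the projection $\mathcal{B}_g \ni \phi = \phi_z + \phi_w \mapsto \phi_w$ delivered by the Linear Decomposition Lemma II; this projection is injective by nondegeneracy, since $\phi_w = 0$ forces $\phi = \phi_z \in W^{4,2}_{-\delta} \cap \ker L_g = \{0\}$. Shrinking $\mathcal{U}$ if needed, $\Phi$ is therefore an embedding.

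The main task is to verify $\Phi^{\ast} \omega \equiv 0$ on $\mathcal{U}$. At any $\widetilde{g} \in \mathcal{U}$, which we may assume is itself nondegenerate, two tangent vectors are realised as Jacobi fields $\phi, \psi \in \mathcal{B}_{\widetilde{g}}$. The key structural feature of the definition \eqref{defn_symp_form} is that the lower-order multiplication terms cancel in the integrand, reducing it to $\phi \Delta_0^2 \psi - \psi \Delta_0^2 \phi$, so the same formula holds with $L_{\widetilde g}$ in place of $L_g$. Using $L_{\widetilde g}(\phi) = L_{\widetilde g}(\psi) = 0$ on $\Omega_r$ and Green's identity,
\begin{equation*}
0 = \int_{\Omega_r} \bigl( \phi\, L_{\widetilde g}(\psi) - \psi\, L_{\widetilde g}(\phi) \bigr)\, d\mu_0 = \sum_{i=1}^k \int_{\partial \B_r(p_i)} \bigl( \psi\, \partial_r \Delta_0 \phi - \phi\, \partial_r \Delta_0 \psi + \partial_r \phi\, \Delta_0 \psi - \partial_r \psi\, \Delta_0 \phi \bigr)\, d\sigma_0.
\end{equation*}
Invoking the decompositions $\phi = \phi_z + \phi_w$, $\psi = \psi_z + \psi_w$ and letting $r \searrow 0$, I expect the cross terms containing any decaying factor to vanish, leaving only the boundary expression computed in Theorem \ref{thm:symp_form}, which is precisely $\omega(d\Phi\,\phi,\, d\Phi\,\psi)$. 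Combined with the identity above this yields $\omega(d\Phi\,\phi,\, d\Phi\,\psi) = 0$.

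The main obstacle is justifying the vanishing of these cross boundary terms. In Emden--Fowler coordinates at a puncture, elliptic regularity applied to $L_{\widetilde g}\phi_z = -L_{\widetilde g}\phi_w$ together with the weighted Sobolev embedding yields, for arbitrarily small $\delta' > 0$, pointwise estimates of the form $|\partial_t^{\,j} \mathfrak{F}(\phi_z)(t,\theta)| \lesssim e^{-(\delta - \delta') t}$ for $j \leq 3$, while $\mathfrak{F}(\phi_w)$ and its derivatives grow at most linearly in $t$. Hence each cross boundary integrand on $\{t = T\} \times \Ss^{n-1}$ is dominated by $T e^{-(\delta - \delta') T}$ and vanishes as $T \to \infty$; the pure $\phi_z\psi_z$-term vanishes for the same reason. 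Once this decay is in hand, the Green's-identity computation forces $\Phi^{\ast} \omega \equiv 0$, and together with the dimension count $\dim \Phi(\mathcal{U}) = k = \tfrac{1}{2}\dim\mathcal{W}_g$ this is exactly the Lagrangian condition.
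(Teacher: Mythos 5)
Your route is essentially the paper's: identify a neighborhood $\mathcal{U}$ of $g$ with the zero set from Theorem \ref{thm_nondegen_case}, realize tangent vectors as Jacobi fields, split them by the Linear Decomposition Lemma into a deficiency part plus a decaying part, kill the bulk integral in \eqref{defn_symp_form} by Green's identity, and finish with the dimension count $\dim \mathcal{B}_g = k = \tfrac12 \dim \mathcal{W}_g$. At the center point $g$ your argument (including the check, via nondegeneracy, that $d\Phi$ is injective on $\mathcal{B}_g$) is correct and is exactly the content of the paper's brief proof, which only identifies $T_g\mathcal{M}_\Lambda \simeq \mathcal{B}_g$ as a $k$-dimensional subspace of $\mathcal{W}_g$. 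The decay estimates you invoke to discard the cross boundary terms are also fine; they are the content of Proposition \ref{prop:jac_asymp_expansion} and the decomposition lemma.

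The gap is in the step where you assert that the limiting boundary expression at a nearby point $\widetilde g$ ``is precisely $\omega(d\Phi\,\phi, d\Phi\,\psi)$.'' At $\widetilde g$ the deficiency parts $\phi_w,\psi_w$ are combinations of $w_0^\pm(\varepsilon_i(\widetilde g))$, the Delaunay Jacobi fields at the necksizes of $\widetilde g$, whereas $\omega$ is a fixed bilinear form on $\mathcal{W}_g$, built from $w_0^\pm(\varepsilon_i(g))$. The computation in the proof of Theorem \ref{thm:symp_form} shows that the boundary pairing at the $i$-th puncture is the standard $2\times2$ skew form on the coefficients times the weight $|\Ss^{n-1}|A_{\varepsilon}(0)=-|\Ss^{n-1}|\tfrac{d}{d\varepsilon}\mathcal{H}_\varepsilon$ evaluated at the relevant necksize. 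So, writing $d\Phi\,\phi$ and $d\Phi\,\psi$ in the basis of $\mathcal{W}_g$ with coefficients $(\dot a_i^\pm)$, $(\dot b_i^\pm)$, your Green's identity at $\widetilde g$ yields $\sum_i A_{\varepsilon_i(\widetilde g)}(0)\,(\dot a_i^+\dot b_i^- - \dot a_i^-\dot b_i^+)=0$, while $\omega(d\Phi\,\phi,d\Phi\,\psi)=|\Ss^{n-1}|\sum_i A_{\varepsilon_i(g)}(0)\,(\dot a_i^+\dot b_i^- - \dot a_i^-\dot b_i^+)$. These agree only at $\widetilde g=g$; vanishing of a skew form with one set of positive, point-dependent puncture weights does not imply vanishing with another, so isotropy of $d\Phi(T_{\widetilde g}\mathcal{U})$ for $\omega$ does not follow as written. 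This is precisely the part of the Lagrangian claim that goes beyond the point $g$ (the paper's proof is silent about it). A clean repair is to absorb the weight into the embedding, e.g. parameterize the Delaunay asymptotes by $(T_i,\mathcal{H}_{\varepsilon_i})$ rather than $(T_i,\varepsilon_i)$ (equivalently rescale the $w_0^-(\varepsilon_i)$-direction by $-\tfrac{d}{d\varepsilon}\mathcal{H}_\varepsilon$), after which the limiting boundary pairing is a constant-coefficient form independent of the base point and your argument closes at every $\widetilde g$. A minor further point: you do not need to ``assume $\widetilde g$ is itself nondegenerate'' --- smoothness of $\mathcal{U}$ already makes every tangent vector at $\widetilde g$ a Jacobi field to which the decomposition applies; if you do want nondegeneracy near $g$, it requires an argument (semicontinuity of the kernel of the Fredholm family), not an assumption.
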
 

 \begin{proof} 
 As in the proof of Theorem \ref{thm_nondegen_case}, we can 
 identify the bounded null space $\mathcal{B}_g$ as the tangent space 
 $T_g\mathcal{M}_\Lambda$. In particular, this identification shows 
 $L_g(v) = 0$ for each $v \in \mathcal{B}_g$. On the other hand, the 
 linear decomposition lemma allows us to identify $\mathcal{B}_g$
 as a $k$-dimensional subspace of $\mathcal{W}_g$. The corollary now follows.  
 \end{proof} 

\bibliography{moduli_references}

\providecommand{\bysame}{\leavevmode\hbox to3em{\hrulefill}\thinspace}
\providecommand{\MR}{\relax\ifhmode\unskip\space\fi MR }
\providecommand{\MRhref}[2]{%
  \href{http://www.ams.org/mathscinet-getitem?mr=#1}{#2}
}
\providecommand{\href}[2]{#2}
\begin{thebibliography}{10}

\bibitem{del-ends}
J.~H. Andrade, R.~Caju, J.~M. do~{\`O}, J.~Ratzkin, and A.~Silva Santos,
  \emph{Constant {$Q$}-curvature metrics with {D}elaunay ends: the
  nondegenerate case}, Ann. Sc. Norm. Super. Pisa \textbf{to appear}.

\bibitem{compactness}
J.~H. Andrade, J.~M. do~\`O, and J.~Ratzkin, \emph{Compactness within the space
  of complete, constant-{$Q$}-curvature metrics on the sphere with isolated
  singularities}, Int. Math. Res. Not. IMRN \textbf{2022} (2022), 17282--17302.

\bibitem{del-ends2}
J.~H. Andrade, J.~Wei, and Z.~Ye, \emph{Complete metrics with constant
  fractional higher-order {$Q$}-curvature on the punctured sphere}, J. Geom.
  Anal. \textbf{34} (2024).

\bibitem{BR}
S.~Baraket and S.~Rebhi, \emph{Construction of dipole type singular solutions
  for a biharmonic equation with critical {S}obolev exponent}, Adv. Nonlinear
  Stud. \textbf{2} (2002), 459--476.

\bibitem{FK}
R.~Frank and T.~K\"onig, \emph{Classification of positive solutions to a
  biharmonic equation with critical exponent}, Anal. PDE \textbf{12} (2019),
  1101--1113.

\bibitem{JX}
T.~Jin and J.~Xiong, \emph{Asymptotic symmetry and local behavior of solutions
  of higher order conformally invariant equations with isolated singularities},
  Ann. Inst. H. Poincar\'e C Anal. Non Lin\'eare \textbf{38} (2021),
  1167--1216.

\bibitem{KMP}
R.~Kusner, R.~Mazzeo, and D.~Pollack, \emph{The moduli space of complete
  embedded constant mean curvature surfaces}, Geom. Funt. Anal. \textbf{6}
  (1996), 120--137.

\bibitem{Lin}
C.~S. Lin, \emph{A classification of solutions of a conformally invariant
  fourth order equation in {$\mathbf{R}^n$}}, Comment. Math. Helv. \textbf{73}
  (1998), 206--231.

\bibitem{MPU}
R.~Mazzeo, D.~Pollack, and K.~Uhlenbeck, \emph{Moduli spaces of singular
  {Y}amabe metrics}, J. Amer. Math. Soc. \textbf{9} (1996), 303--344.

\bibitem{Mel}
R.~Melrose, \emph{The atiyah-patodi-singer index theorem}, CRC Press/AK Peters,
  1993.

\bibitem{jesse2020}
J.~Ratzkin, \emph{On constant {$Q$}-curvature metrics with isolated
  singularities}, arXiv:2001.07984 (2020).

\bibitem{SY}
R.~Schoen and S.-T. Yau, \emph{Conformally flat manifolds, {K}leinian groups
  and scalar curvature}, Invent. Math. \textbf{92} (1988), 47--71.

\bibitem{Si}
L.~Simon, \emph{Asymptotics for a class of nonlinear evolution equations, with
  applications to geometric problems}, Ann. of Math. \textbf{118} (1983),
  525--571.

\end{thebibliography}
\bibliographystyle{amsplain}

\end{document}